\RequirePackage{amsmath}
\documentclass{svjour3}                     
\smartqed  
\usepackage{color}
\usepackage{fix-cm}
\usepackage{amsmath,amsfonts,amssymb,bm,mathtools}
\usepackage{graphicx,subfigure}
\usepackage{enumitem}
\usepackage{epstopdf}
\usepackage{empheq}
\usepackage{float}
\usepackage{bm}
\usepackage{fullpage}
\usepackage{placeins}
\allowdisplaybreaks

\def\subFV{\scriptscriptstyle{FV}}
\def\subFS{\scriptscriptstyle{FS}}
\def\subVS{\scriptscriptstyle{VS}}

\newtheorem{thm}{Theorem}

\journalname{Journal of Scientific Computing}
\begin{document}

\title{
Structure-preserving upwind Lagrange multiplier schemes for solid-state dewetting with a logarithmic Flory--Huggins potential}

\titlerunning{Structure-preserving schemes for simulating solid-state dewetting}

\author{Qiong-Ao Huang \and Ying-Wei Wang \and Cheng Yuan}


\institute{Qiong-Ao Huang\at
School of Mathematics and Statistics, Henan University, Kaifeng 475004, China\\
Center for Applied Mathematics of Henan Province, Henan University, Zhengzhou 450046, China\\
\email{huangqiongao@henu.edu.cn}
\and
Ying-Wei Wang\at
School of Mathematics and Statistics, Henan University, Kaifeng 475004, China\\
\email{wangyingwei@henu.edu.cn}
\and
Cheng Yuan\at
School of Artificial Intelligence, Wuhan University, Wuhan 430072, China\\
\email{yuancheng@whu.edu.cn}
}

\date{Received: date / Accepted: date}

\maketitle

\begin{abstract}
Phase-field simulations of solid-state dewetting based on polynomial potentials
suffer from spurious bulk-diffusion coarsening that contradicts the surface-diffusion-dominated
kinetics of the underlying physics.
To resolve this issue, we formulate a phase-field model with the logarithmic Flory--Huggins potential
for the degenerate Cahn--Hilliard equation. The logarithmic barrier intrinsically confines the phase variable to its physical range and thereby removes the spurious coarsening at the continuum level. The model also incorporates dynamic contact line boundary conditions
for the motion of the film--substrate--vapor triple junction, whose discrete treatment is a key difficulty addressed in this work.
A new fully discrete, structure-preserving scheme is developed by combining
a Lagrange multiplier approach with an upwind finite-volume discretization.
The scheme is rigorously proved to guarantee pointwise boundedness of the phase variable,
discrete mass conservation, and energy dissipation. These three properties are established without any artificial cut-off or projection step.
A dimensional-splitting technique is introduced to reduce computational cost,
and it is proved that all three structural properties are preserved exactly in every one-dimensional sweep.
A theoretical analysis of spontaneous film shrinking yields an explicit estimate
of the equilibrium radius contraction. To our knowledge, this provides the first explicit quantification of this spurious effect for film--substrate systems with moving contact lines, and it demonstrates that the logarithmic potential
exhibits weaker spurious shrinkage than its polynomial counterpart.
Numerical experiments confirm the theoretical predictions and demonstrate that,
when the temperature parameter in the logarithmic potential is small, the proposed
scheme eliminates spurious coarsening and pinch-off,
faithfully reproducing surface-diffusion-dominated interfacial dynamics
and morphological relaxation toward equilibrium island structures.

\keywords{Solid-state dewetting\and
Degenerate Cahn--Hilliard equation\and
Dynamic contact line\and
Logarithmic Flory--Huggins potential\and
Structure-preserving scheme.}

\subclass{35K35 \and 35K55 \and 35K65 \and 65M08 \and 65Z05.}
\end{abstract}

\section{Introduction}
\label{secint}

Solid-state dewetting refers to the spontaneous rupture and agglomeration of a thin solid film deposited on a substrate into discrete, island-like structures, driven by surface diffusion at elevated temperatures.
This phenomenon has been observed across a wide spectrum of material systems,
from semiconductors such as Si and Ge to metals including Au, Ni, and Co,
and carries considerable technological implications for thin-film solar cells~\cite{Danielson06}, sensor devices~\cite{Rath07},
catalytic growth of carbon nanotubes~\cite{Randolph07}, optoelectronics~\cite{Armelao06}, and semiconductor nanowires~\cite{Schmidt09}.
Fundamental understanding of the underlying pattern-formation mechanisms has advanced substantially over the past decades~\cite{Ye10,Ye10b,Ye11b,Wong00,Naffouti17},
as comprehensively surveyed in the reviews by Thompson~\cite{Thompson12} and by Leroy~\textit{et~al.}~\cite{Leroy16}.

At first glance, the morphological evolution of solid thin films resembles that of liquid films:
both may undergo retraction, pinch-off, and eventual equilibration into arrays of isolated particles.
The two processes, however, are governed by fundamentally distinct mass transport mechanisms~\cite{Craster09,Gennes85,Ren07,Ren10,Baumchen14}.
Liquid-film dewetting is driven by fluid flow and hydrodynamic instabilities,
whereas solid-state dewetting proceeds exclusively by surface diffusion along the film--vapor interface.
Consequently, the latter can be formulated as a moving-boundary problem for surface diffusion flow,
with the contact line (where the film, substrate, and vapor phases meet) migrating as the interface evolves~\cite{Jiang12,Wang15,Jiang16,Jiang20,Jiang18a}.

Two widely used mathematical frameworks for modeling solid-state dewetting are sharp-interface models and phase-field models.
The first sharp-interface formulation was proposed by Srolovitz and Safran~\cite{Srolovitz86},
who employed geometric flow theory to study hole growth under isotropic surface energy and cylindrical symmetry.
Subsequent efforts extended this framework in several directions: Wong~\textit{et~al.}~\cite{Wong00} and Du~\textit{et~al.}~\cite{Du10} introduced marker-particle methods for two- and three-dimensional simulations,
while a systematic energy-variational derivation of evolution equations for open curves with weakly and strongly anisotropic surface energies was carried out by several groups~\cite{Wang15,Jiang16,Jiang18a},
with extensions to axisymmetric~\cite{Zhao19} and fully three-dimensional geometries~\cite{Jiang20}.
Sharp-interface models provide a precise description of interfacial kinematics,
yet they face considerable practical difficulties in handling contact line migration and topological changes such as pinch-off and coalescence. Phase-field models circumvent these difficulties by replacing sharp interfaces with diffuse transition layers,
thereby capturing topological events naturally and extending readily to higher dimensions and complex geometries~\cite{Jiang12,Dziwnik17,Garcke22}.
Within this framework, the total free energy of the film--substrate system is expressed as a Ginzburg--Landau-type functional,
and the governing equations follow as an $H^{-1}$ gradient flow, yielding a degenerate Cahn--Hilliard equation.

A distinct challenge arises at the contact line.
In hydrodynamic problems involving moving contact lines,
the classical no-slip boundary condition leads to a non-integrable stress singularity,
a difficulty known as the contact line paradox~\cite{Dussan79}.
Experiments, atomistic simulations, and studies of crystal interface migration all indicate that the dynamic contact angle relaxes gradually toward its equilibrium (Young) value~\cite{Mattissen05,Qian03}.
To regularize this behavior, Qian~\textit{et~al.}~\cite{Qian03,Qian06} introduced a dynamic contact line boundary condition
that relates the contact line velocity to the local chemical potential gradient,
effectively allowing the contact line to move via a dissipative mechanism.
This boundary condition has been incorporated into phase-field models for solid-state dewetting~\cite{Huang19b}.

At the continuum level, the phase-field evolution equations, supplemented with appropriate boundary conditions,
satisfy two fundamental thermodynamic constraints: energy dissipation and mass conservation.
Preserving these properties, together with the pointwise boundedness of the phase variable,
at the discrete numerical level is essential for eliminating unphysical artifacts and constitutes the central objective in the construction of structure-preserving schemes. The simultaneous preservation of these three properties under dynamic contact line boundary conditions is the main algorithmic challenge, and it is precisely this challenge that the present scheme resolves.
To this end, several effective approaches have been proposed for the phase-field modeling of solid-state dewetting,
including convex splitting~\cite{Backofen19}, the invariant energy quadratization (IEQ) method~\cite{Huang19b},
and the scalar auxiliary variable (SAV) method~\cite{Chen20}.
More recently, Huang~\textit{et~al.}~\cite{Huang23b} developed an upwind Lagrange multiplier scheme that combines upwind numerical fluxes with a Lagrange multiplier reformulation of the energy functional.
By using the upwind flux to discretize the degenerate mobility,
the scheme naturally enforces the boundedness of the phase variable without artificial cut-off or truncation,
while retaining the energy stability inherited from the Lagrange multiplier framework.
This approach was originally formulated for homogeneous Neumann boundary conditions;
extending it to dynamic contact line boundary conditions,
which are indispensable for capturing the physics of the film--substrate--vapor triple junction,
remains an open problem that the present work addresses.

A second, more subtle numerical pathology plagues conventional phase-field simulations of solid-state dewetting.
When the polynomial double-well potential $F_{pol}(\phi)=\frac{1}{4}(1-\phi^2)^2$ is paired with a degenerate mobility $M(\phi)=(1-\phi^2)^k$, matched asymptotic analysis formally recovers surface diffusion in the sharp-interface limit for $k\ge 2$~\cite{Lee16}. In practice, however, the polynomial potential admits values of $\phi$ outside the physically admissible interval $[-1,1]$,
and the resulting bulk diffusion drives spurious mass transfer between well-separated domains of the same phase:
smaller features are progressively absorbed by larger ones, a phenomenon known as bulk-diffusion coarsening~\cite{Pesce21,Bretin22}. Cahn~\textit{et~al.}~\cite{Cahn96} proved that replacing $F_{pol}$ with the logarithmic Flory--Huggins potential eliminates this artifact: the logarithmic barrier at $\phi=\pm 1$ intrinsically confines the phase variable to $|\phi|\le\beta_{\theta}<1$,
and for the temperature scaling $\theta=\mathcal{O}(\varepsilon^{\alpha})$ with $\alpha>0$,
the sharp-interface limit remains surface diffusion.
Despite this theoretical guarantee, a fully discrete, structure-preserving scheme that pairs the logarithmic Flory--Huggins potential with an upwind Lagrange multiplier discretization and dynamic contact line boundary conditions has not, to our knowledge, been developed or analyzed.

In this paper, we close this gap. Our principal contributions are as follows:

\begin{itemize}
    \item 
We formulate a phase-field model for solid-state dewetting in which the logarithmic Flory--Huggins potential replaces the conventional polynomial potential, thereby suppressing spurious bulk-diffusion coarsening at the PDE level. Unlike the polynomial potential, the logarithmic barrier intrinsically enforces $|\phi|\le\beta_{\theta}<1$ and thus removes this artifact at its physical origin.

    \item 
We construct a fully discrete, structure-preserving scheme for the degenerate Cahn--Hilliard equation with dynamic contact line boundary conditions, combining a Lagrange multiplier approach with an upwind finite-volume spatial discretization. A novel feature is the discrete treatment of the moving contact line, which couples the bulk evolution to the wall energy without violating the structural properties. The scheme is rigorously proved to ensure pointwise boundedness of the phase variable, discrete mass conservation, and energy dissipation.

    \item 
We introduce a dimensional-splitting technique that decomposes each time step into alternating one-dimensional sweeps, substantially reducing the computational cost of multidimensional simulations. We prove that boundedness, mass conservation, and energy dissipation are all preserved exactly by the split scheme, so that the dimensionality reduction does not come at the expense of structure preservation.

    \item 
We perform a theoretical analysis of spontaneous film shrinking induced by the diffuse-interface approximation and derive an explicit formula for the equilibrium radius contraction. This yields a quantitative, physically interpretable prediction of the spurious area loss, and it establishes that the logarithmic potential suffers markedly less from this spurious effect than its polynomial counterpart.

\end{itemize}
Extensive numerical experiments, spanning spontaneous shrinkage, coarsening suppression, contact line dynamics, morphological equilibration under varying wettability, and pinch-off, confirm the theoretical results and demonstrate the efficacy of the proposed framework.

The remainder of this paper is organized as follows.
Section~2 introduces the phase-field model with dynamic contact line boundary conditions and presents the theoretical analysis of spontaneous film shrinking.
Section~3 develops the fully discrete upwind-Lagrange multiplier scheme and proves its structure-preserving properties.
Section~4 introduces a dimensional-splitting technique that reduces the computational cost of multidimensional simulations,
with corresponding proofs that all three structural properties are preserved.
Section~5 presents a comprehensive suite of numerical experiments.
Section~6 concludes with a summary and outlook.

\section{Phase-field model}
\label{sec2}

We first formulate the coupled energy structure that the numerical method will preserve. Starting from the bulk and wall energies, we derive the conserved bulk evolution and dynamic contact line condition. We then examine why these structural laws can coexist with geometric area loss by estimating spontaneous shrinkage in a simplified circular-segment geometry.

\begin{figure}[!htp]
\centering
\includegraphics[width=8.5cm]{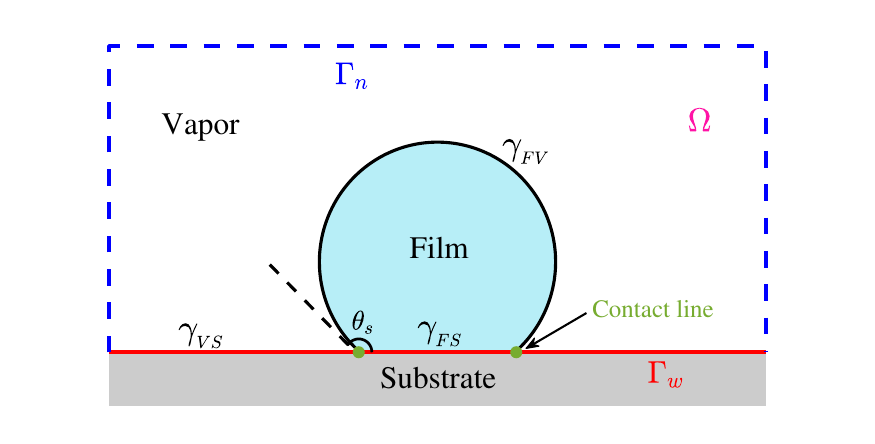}
\includegraphics[width=6.0cm]{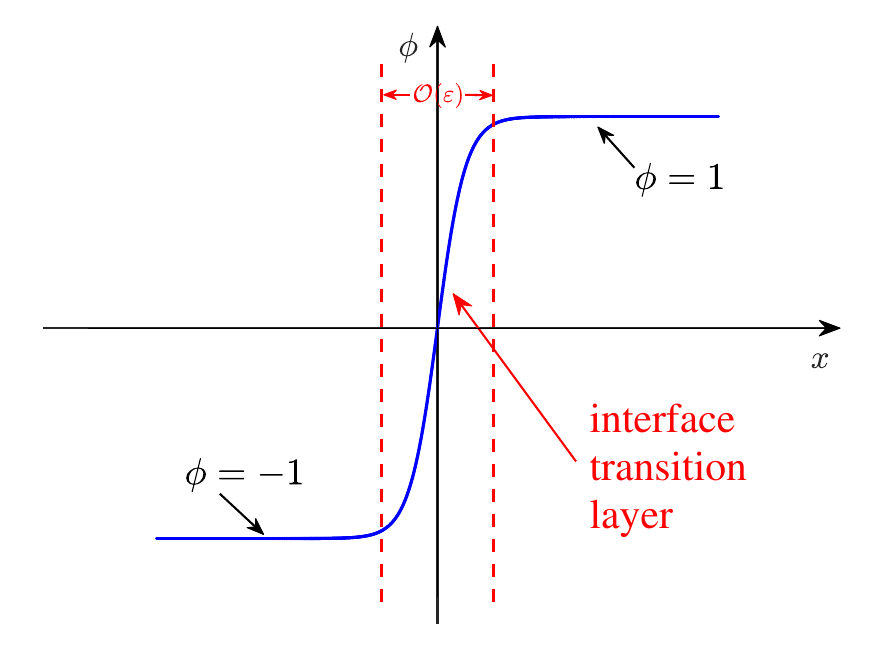}
\caption{
Left: film--substrate geometry and interfacial energy densities; $\theta_s$ is the Young contact angle. Right: a schematic diffuse-interface profile. The labels $\pm1$ illustrate limiting pure states; the logarithmic-potential minima are at $\pm\beta_\theta$.}\label{figdomain}
\end{figure}

\subsection{Free energy}

In the sharp-interface framework, the total interfacial free energy $W$ of the thin film/substrate system
(as illustrated in Fig.~\ref{figdomain}) is given by~\cite{Wang15,Jiang16,Jiang18a,Jiang20}
\begin{eqnarray}
W=W_{_{FV}}+W_{_{W}}
=\gamma_{_{\subFV}}|\Gamma_{_{\subFV}}|+\underbrace{\gamma_{_{\subFS}}|\Gamma_{_{\subFS}}|+
\gamma_{_{\subVS}}|\Gamma_{_{\subVS}}|}\limits_{\textbf{Wall Energy}},\label{eqo1}
\end{eqnarray}
\begingroup\emergencystretch=1em

where $\gamma_{{\subFV}}$, $\gamma_{{\subFS}}$, and $\gamma_{{\subVS}}$ denote the surface energy densities of the film/vapor, film/substrate, and vapor/substrate interfaces, respectively, while $|\Gamma_{_{\subFV}}|$, $|\Gamma_{_{\subFS}}|$, and $|\Gamma_{_{\subVS}}|$ represent the corresponding lengths (in 2D) or areas (in 3D) of these interfaces. The first term in \eqref{eqo1} is the film--vapor energy; the remaining two terms constitute the wall energy, with constant substrate interfacial energy densities.
\par\endgroup

While the sharp-interface formulation~\eqref{eqo1} provides a clear physical picture,
it is ill-suited for handling topological changes such as pinch-off and coalescence.
We therefore recast the total energy in a diffuse-interface framework,
where the sharp interfaces are replaced by thin transition layers of width $\varepsilon$ (see Fig.~\ref{figdomain}).
Within the phase-field framework, the corresponding total free energy for solid-state dewetting
is expressed as~\cite{Jiang12,Huang19b,Dziwnik17,Garcke22}
\begin{equation}		
W^{\varepsilon}[\phi(\bm{x},t)]=W_{_{FV}}^{\varepsilon}+W_{_{W}}^{\varepsilon}
=\int_{\Omega}f_{_{FV}}(\phi)\mathrm{d}\bm{x}+\int_{\Gamma_{w}}f_{_{W}}(\phi)\mathrm{d}s,\quad
(\bm{x},t)\in\Omega\times[0,T],\label{eqo2}
\end{equation}
where $\Omega\subset\mathbb{R}^{d}$ is an open bounded domain

whose boundary consists of a non-substrate part $\Gamma_{n}$ and a substrate part $\Gamma_{w}$,
$W_{_{FV}}^{\varepsilon}$ represents the combined energy of the thin film and vapor phases,
$W_{_{W}}^{\varepsilon}$ denotes the wall energy,
and $f_{_{FV}}(\phi)$ and $f_{_{W}}(\phi)$ are the corresponding energy density functions.

The film/vapor phase energy density $f_{_{FV}}$ is defined as
\begin{equation}
f_{_{FV}}(\phi) \triangleq\lambda_{m}\left(\frac{\varepsilon}{2}|\nabla \phi|^{2}+\frac{1}{\varepsilon} F(\phi)\right),\label{eqo3}
\end{equation}
where $0<\varepsilon\ll 1$ is a small parameter controlling the interfacial width, $F(\phi)$ is a double-well potential,
and $\lambda_{m}=\gamma_{_{FV}}/c_{_{F}}$ represents the mixing energy density.
Here, $c_{_{F}}$ is an interfacial constant that depends on the specific form of $F(\phi)$, as detailed below.
A typical thermodynamically consistent form of $F(\phi)$ is given by the logarithmic Flory--Huggins potential~\cite{Cahn96,Huang23b}:
\begin{equation}
F(\phi)=F_{log}(\phi)-F_{log}(\beta_{_{\theta}}),\label{eqo4o}
\end{equation}
with
\begin{equation}
F_{log}(\phi;\theta)=\frac{\theta}{2}\big[(1+\phi)\ln(1+\phi)+(1-\phi)\ln(1-\phi)\big]+\frac{1}{2}(1-\phi^{2}),\quad 0<\theta<1,\label{eqo4}
\end{equation}
\begingroup\emergencystretch=1em

where $\theta$ is a dimensionless temperature parameter.
It can be verified that $F_{log}(\phi;\theta)$ exhibits a double-well structure, with two minima located at $\pm\beta_{_{\theta}}\in(-1,1)$,
where $\beta_{_{\theta}}$ is the positive root of the equation
$F_{log}'(\chi)=\frac{\theta}{2}\ln\frac{1+\chi}{1-\chi}-\chi=0$,
and satisfies $\beta_{_{\theta}}\to 1$ as $\theta\to 0^{+}$.
To ensure that the energy functional $W_{_{FV}}^{\varepsilon}$ with energy density $f_{_{FV}}$ defined in Eq.~\eqref{eqo3}
$\Gamma$-converges to the sharp-interface energy $\gamma_{_{FV}}|\Gamma_{_{FV}}|$,
the interfacial constant must be chosen as~\cite{Garcke22,Modica77,Modica87}:
\par\endgroup
\begin{equation}
c_{_{F_{log}}}=\int_{-\beta_{_{\theta}}}^{\beta_{_{\theta}}}\sqrt{2F(\chi)}\mathrm{d}\chi
=\int_{-\beta_{_{\theta}}}^{\beta_{_{\theta}}}\sqrt{2\big(F_{log}(\chi)-F_{log}(\beta_{_{\theta}})\big)}\mathrm{d}\chi.\label{eqo5}
\end{equation}

The wall energy $W_{_W}^{\varepsilon}$, expressed via the wall energy density $f_{_{W}}$, must satisfy the following physical constraints:
$f_{_{W}}=\gamma_{_{\subVS}}$ and $f'_{_{W}}=0$ when $\phi=-\beta_{_{\theta}}$ (pure vapor phase in contact with the substrate),
and $f_{_{W}}=\gamma_{_{\subFS}}$ and $f'_{_{W}}=0$ when $\phi=\beta_{_{\theta}}$ (pure film phase in contact with the substrate).
The simplest polynomial interpolant satisfying these four conditions is the cubic function:
\begin{eqnarray}
f_{_{W}}(\phi)=\frac{\gamma_{_{\subVS}}-\gamma_{_{\subFS}}}{4\beta_{_{\theta}}^{3}}(\phi^{3}-3\beta_{_{\theta}}^{2}\phi)
  +\frac{\gamma_{_{\subVS}}+\gamma_{_{\subFS}}}{2}.\label{eqo6}
\end{eqnarray}

For comparison, we use the standard quartic double-well potential~\cite{Jiang12,Huang19b,Dziwnik17,Garcke22}:
\begin{equation}
F_{pol}(\phi)=\frac{1}{4}(1-\phi^{2})^{2},\label{eqo7}
\end{equation}
which exhibits a double-well structure with minima at $\pm 1$.
The corresponding interfacial constant is~\cite{Jiang12,Garcke22}:
\begin{equation}
c_{_{F_{pol}}}=\int_{-1}^{1}\sqrt{2F_{pol}(\chi)}\mathrm{d}\chi=\frac{2\sqrt{2}}{3}.\label{eqcFb}
\end{equation}
In this case, the wall energy density $f_{_{W}}(\phi)$ takes the same form as \eqref{eqo6} with $\beta_{_{\theta}}$ replaced by $1$.

\subsection{Governing equations}

For convenience, we normalize the total free energy~\eqref{eqo2} by the factor $\varepsilon/\lambda_{m}$ and drop an additive constant that does not affect the gradient flow dynamics, yielding
\begin{equation}
W(\phi)=\int_{\Omega}\left(\frac{\varepsilon^2}{2}|\nabla \phi|^{2}+ F(\phi)\right)\mathrm{d}\bm{x}
  +\int_{\Gamma_{w}}g(\phi)\mathrm{d} s, \quad\text{where}\quad
g(\phi)\triangleq\frac{\varepsilon c_{_{F}}\cos\theta_{s}}{4\beta_{_{\theta}}^{3}}(\phi^{3}-3\beta_{_{\theta}}^{2}\phi), \label{eqo9}
\end{equation}
$F(\phi)$ is defined by \eqref{eqo4o} or \eqref{eqo7},
and $\theta_{s}\in[0,\pi]$ is the prescribed contact angle determined by the Young equation, i.e., $\cos\theta_{s}=\frac{\gamma_{_{VS}}-\gamma_{_{FS}}}{\gamma_{_{FV}}}$.

To derive the governing equations from the energy functional $W(\phi)$ in~\eqref{eqo9},
we compute its first variation. For any smooth test function $\psi$,
\begin{align}
\left.\frac{\mathrm{d} W[\phi+r \psi]}{\mathrm{d} r}\right|_{r = 0}
&=\left.\frac{\mathrm{d}}{\mathrm{d} r}\left[\int_{\Omega}\left(\frac{\varepsilon^2}{2}|\nabla(\phi+r \psi)|^{2}+ F(\phi+r \psi)\right) \mathrm{d} \bm{x}+\int_{\Gamma_{w}} g(\phi+r \psi) \mathrm{d}s\right]\right|_{r = 0} \nonumber\\
&=\int_{\Omega}\left(\varepsilon^2 \nabla \phi \cdot \nabla \psi+ F'(\phi) \psi\right) \mathrm{d} \boldsymbol{x}+\int_{\Gamma_{w}} g'(\phi) \psi \mathrm{d} s\nonumber\\
&=\int_{\Omega}\left(-\varepsilon^2 \Delta \phi+ F'(\phi)\right) \psi \mathrm{d} \bm{x}+\int_{\Gamma_{n}}(\varepsilon^2 \nabla \phi \cdot \bm{n}) \psi \mathrm{d} s+\int_{\Gamma_{w}}\left(\varepsilon^2 \nabla \phi \cdot \bm{n}+g'(\phi)\right) \psi \mathrm{d}s, \label{eqo10}
\end{align}
where $\bm{n}$ is the outward unit normal vector on $\partial\Omega$.

By taking the $H^{-1}$-gradient flow of the energy functional $W(\phi)$
with respect to the order parameter, we obtain the following
Cahn--Hilliard-type equation with degenerate mobility for solid-state dewetting:
\begin{equation}
\left\{\begin{array}{l}
\frac{\partial \phi}{\partial t}=-\nabla\cdot\bm{J},\\[1mm]
\bm{J}=-M(\phi) \nabla \mu,\\[1mm]
\mu=-\varepsilon^2 \Delta \phi+ F'(\phi),
\end{array}\right.\quad\text{in}~\Omega\times(0,T],\label{eqo11}
\end{equation}
subject to the following dynamic contact line boundary conditions
\begin{align}
\begin{cases}
\frac{\partial \phi}{\partial t}=-\kappa\left(\varepsilon^2 \frac{\partial \phi}{\partial \boldsymbol{n}}+g'(\phi)\right), \quad &\frac{\partial \mu}{\partial \boldsymbol{n}} = 0,\quad \text{on}~\Gamma_{w}, \\[1mm]
\varepsilon^2 \frac{\partial \phi}{\partial \boldsymbol{n}}=0, \quad 	&\frac{\partial \mu}{\partial \boldsymbol{n}} = 0,\quad \text{on} ~\Gamma_{n},
\end{cases} \label{eqo12}
\end{align}
where $\bm{J}$ is the mass flux, $\mu$ is the chemical potential, $M(\phi)\ge 0$ is the diffusion mobility,
$\kappa>0$ is the contact line mobility and $\bm{n}$ is the unit normal vector pointing outward from $\partial\Omega$.

The mobility determines the transport kinetics of \eqref{eqo11}--\eqref{eqo12} while leaving the energy functional unchanged. With constant mobility, the sharp-interface limit is the bulk-diffusion-driven Mullins--Sekerka problem~\cite{Pego89,Mullins63,Dai16b,Alikakos94}. A mobility that becomes small in the bulk phases reduces bulk transport, although the limiting law also depends on the potential and parameter scaling~\cite{Cahn94,Lee15}. We consider the family
\begin{equation}
M(\phi)=(1-\phi^{2})^{k},\quad k=1,2,3,\cdots,\label{eqo13}
\end{equation}
widely adopted in phase-field studies~\cite{Elliott96,Dai16,Pesce21}.

For the polynomial potential, matched asymptotic analysis with $k=2$ recovers surface diffusion~\cite{Lee16}, but finite-width computations can still exhibit bulk-transport artifacts~\cite{Pesce21,Bretin22}. For the logarithmic potential and $k=1$, Cahn~\textit{et~al.}~\cite{Cahn96} formally derived surface diffusion under the scaling $\theta=\mathcal{O}(\varepsilon^{\alpha})$, $\alpha>0$. The singular derivative at $\phi=\pm1$ must be distinguished from the locations $\pm\beta_{\theta}$ of the potential minima. The latter are equilibrium bulk values, not a bound established by the discrete analysis below. Indeed, $M(\pm\beta_{\theta})>0$ at finite temperature, so reduced coarsening is a low-temperature behavior rather than a consequence of degeneracy at these minima. The numerical experiments therefore assess finite-width effects as well as the structural laws derived next.

A direct calculation shows that the total free energy $W(t)$ defined in~\eqref{eqo9} is dissipated during the evolution:
\begin{align}
\frac{\mathrm{d}W(t)}{\mathrm{d}t}
&=\int_{\Omega}\left(\varepsilon^{2}\nabla\phi\cdot\nabla\phi_{t}+F'(\phi)\phi_{t}\right)\mathrm{d}\bm{x}
   +\int_{\Gamma_{w}}g'(\phi)\phi_{t}\,\mathrm{d}s\nonumber\\
&=\int_{\Omega}\mu\phi_{t}\,\mathrm{d}\bm{x}+\int_{\partial\Omega}\varepsilon^{2}\frac{\partial\phi}{\partial\bm{n}}\phi_{t}\,\mathrm{d}s
   +\int_{\Gamma_{w}}g'(\phi)\phi_{t}\,\mathrm{d}s\nonumber\\
&=\int_{\Omega}\mu\nabla\cdot(M(\phi)\nabla\mu)\mathrm{d}\bm{x}
   +\int_{\Gamma_{w}}\left[\varepsilon^{2}\frac{\partial\phi}{\partial\bm{n}}+g'(\phi)\right]\phi_{t}\,\mathrm{d}s\nonumber\\
&=-\int_{\Omega}M(\phi)|\nabla\mu|^{2}\mathrm{d}\bm{x}-\frac{1}{\kappa}\int_{\Gamma_{w}}(\phi_{t})^2\,\mathrm{d}s\le 0,\label{ener}
\end{align}
where the boundary conditions \eqref{eqo12} further imply the conservation of the total mass $m(t)$:
\begin{eqnarray}
\frac{\mathrm{d}m(t)}{\mathrm{d}t}=\frac{\mathrm{d}}{\mathrm{d}t}\int_{\Omega}\phi\,\mathrm{d}\bm{x}=\int_{\Omega}\phi_{t}\,\mathrm{d}\bm{x}
 =\int_{\Omega}\nabla\cdot(M(\phi)\nabla\mu)\mathrm{d}\bm{x}
 =\int_{\partial\Omega}M(\phi)\frac{\partial\mu}{\partial\bm{n}}\mathrm{d}s=0.\label{mass}
\end{eqnarray}

Our numerical objective is to preserve the mass and energy laws while enforcing $|\phi_{i,j}|<1$ through the discrete fluxes, without a separate projection of the computed solution.

\subsection{Spontaneous shrinkage}

\begin{figure}[!htp]
\centering
\includegraphics[width=8.0cm]{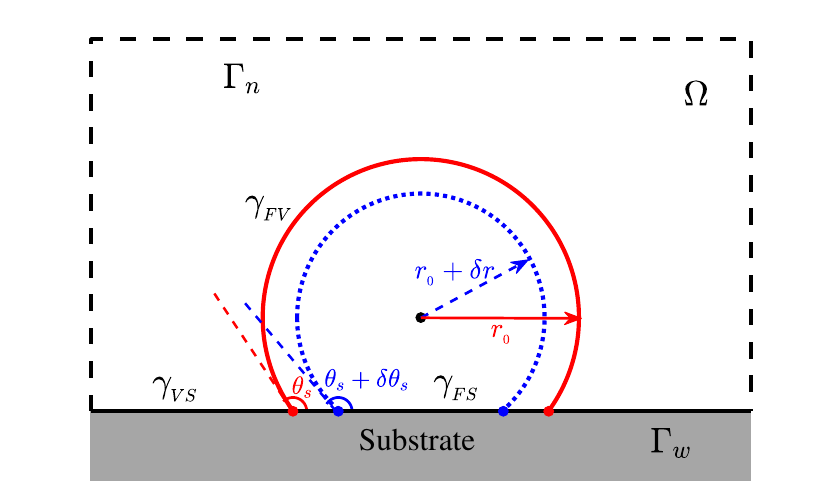}
\includegraphics[width=6.5cm]{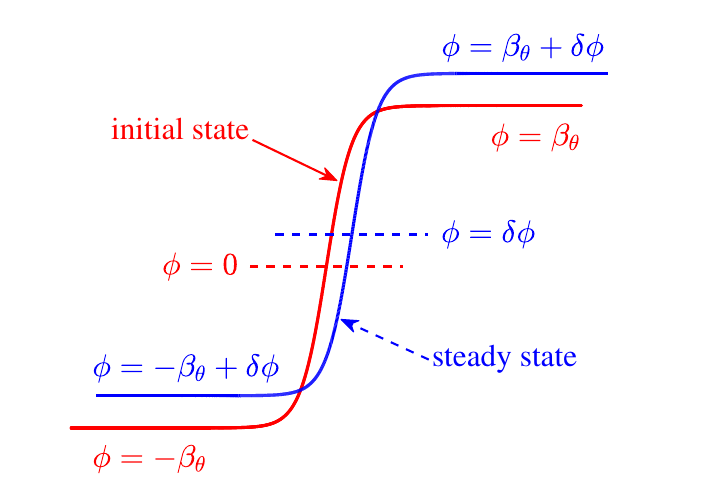}
\caption{
Shrinkage ansatz: (left) changes in radius and apparent contact angle at fixed circle-center height; (right) a common shift of the phase variable in the two bulk phases.}\label{figshrink}
\end{figure}

The mass law derived above constrains the integral of the phase variable, but it does not imply exact conservation of the geometric area enclosed by its zero contour. A film can therefore shrink geometrically while the diffuse-interface model still conserves mass. To quantify this distinction, we estimate shrinkage for a circular film segment on a substrate, extending the energetic argument for isolated drops in Yue~\textit{et~al.}~\cite{Yue07} to include wall energy. The calculation describes a restricted family of configurations and provides a leading-order estimate rather than a general equilibrium theorem.

\begingroup\emergencystretch=1em

We consider a thin-film/substrate system within a two-dimensional domain $\Omega$ of area $S=|\Omega|$
and substrate length $L=|\Gamma_{w}|$.
The initial film profile forms a circular segment (as illustrated in Fig.~\ref{figshrink}),
characterized by a Young contact angle $\theta_{s}\in(0,\pi)$ and radius $r_{_{0}}>0$.
The corresponding arc length between the two contact points is given by $L_{arc}=2r_{_{0}}\theta_{s}$,
the chord length by $L_{cho}=2r_{_{0}}\sin\theta_{s}$, and the film area by $S_{film}=\frac{1}{2}(2\theta_{s}-\sin2\theta_{s})r_{_{0}}^{2}$.
\par\endgroup

Let $\delta r$ denote a small change in radius and $\delta\theta_{s}$ the associated change in the geometric contact angle. We assume $\varepsilon\ll r_{_{0}}$ and approximate the bulk response by the same spatially uniform shift $\delta\phi$ in both phases. This is a leading-order ansatz motivated by the equal curvatures of the symmetric potential at its two minima and by a uniform equilibrium chemical potential. The symbol $\delta\theta_{s}$ describes a change in the apparent angle; the material Young angle remains fixed in the wall energy.

As an additional geometric assumption, we hold the vertical position of the circle center fixed while varying the radius. The circular-segment shape alone does not impose this constraint. For the family of configurations shown in Fig.~\ref{figshrink}, it gives
\begin{equation}
(r_{_{0}}+\delta r)\sin\big(\theta_{s}+\delta\theta_{s}-\frac{\pi}{2}\big)=r_{_{0}}\sin\big(\theta_{s}-\frac{\pi}{2}\big),\label{eqt1}
\end{equation}
or
\begin{equation}
\delta\theta_{s}\approx \frac{\cot\theta_{s}}{r_{_{0}}}\delta r.\label{eqt2}
\end{equation}
For this estimate and the subsequent analysis, we retain only the leading-order contributions.

Under the common bulk-shift ansatz, the deviations $\delta\phi$ from $\phi=\pm\beta_{_{\theta}}$ are equal in the two phases.
Enforcing the mass constraint given by \eqref{mass} then leads to the following relation between $\delta\phi$ and $\delta r$:
\begin{align}
m(t)=&\int_{\Omega}\phi\,\mathrm{d}\bm{x}\approx
 \beta_{_{\theta}}\cdot\frac{1}{2}(2\theta_{s}-\sin2\theta_{s})r_{_{0}}^{2}
 +(-\beta_{_{\theta}})\cdot\Big[S-\frac{1}{2}(2\theta_{s}-\sin2\theta_{s})r_{_{0}}^{2}\Big]\nonumber\\
\approx&\,(\beta_{_{\theta}}+\delta\phi)\cdot
          \frac{1}{2}\big[2(\theta_{s}+\delta\theta_{s})-\sin(2(\theta_{s}+\delta\theta_{s}))\big](r_{_{0}}+\delta r)^{2}\nonumber\\
&+(-\beta_{_{\theta}}+\delta\phi)
     \cdot\Big\{S-\frac{1}{2}\big[2(\theta_{s}+\delta\theta_{s})-\sin(2(\theta_{s}+\delta\theta_{s}))\big](r_{_{0}}+\delta r)^{2}\Big\},\label{eqt3o}
\end{align}
or
\begin{align}
\delta\phi&\approx\frac{\beta_{_{\theta}}(2\theta_{s}-\sin2\theta_{s})r_{_{0}}^{2}
  -\beta_{_{\theta}}\big[2(\theta_{s}+\delta\theta_{s})-\sin(2(\theta_{s}+\delta\theta_{s}))\big](r_{_{0}}+\delta r)^{2}}{S}\nonumber\\
&\approx-\frac{\beta_{_{\theta}}}{S}\big[2r_{_{0}}(2\theta_{s}-\sin2\theta_{s})\delta r+4r_{_{0}}^{2}\sin^{2}\theta_{s}\cdot\delta\theta_{s}\big]
\approx-\frac{4\beta_{_{\theta}}\theta_{s}r_{_{0}}}{S}\delta r.\label{eqt3}
\end{align}

We next analyze the variation in the energy, which comprises the changes in the interfacial energies
at the film--vapor, film--substrate, and vapor--substrate interfaces (denoted as $\delta W_{int}$),
as well as the change in bulk energy (denoted as $\delta W_{bulk}$).
The change in interfacial length, in turn, alters the interfacial energy:
\begin{align}
\delta W_{int}&\approx \gamma_{_{FV}}\delta L_{arc}+\gamma_{_{FS}}\delta L_{cho}+\gamma_{_{VS}}\delta (L-L_{cho})
=\gamma_{_{FV}}\left(\delta L_{arc}-\cos\theta_{s}\cdot\delta L_{cho}\right)\nonumber\\
&=\gamma_{_{FV}}\Big\{\big[2(r_{_{0}}+\delta r)(\theta_{s}+\delta\theta_{s})-2r_{_{0}}\theta_{s}\big]
  -\cos\theta_{s}\cdot\big[2(r_{_{0}}+\delta r)\sin(\theta_{s}+\delta\theta_{s})-2r_{_{0}}\sin\theta_{s}\big]\Big\}\nonumber\\
&\approx \gamma_{_{FV}}\big[(2\theta_{s}-\sin2\theta_{s})\delta r+2r_{_{0}}\sin^{2}\theta_{s}\cdot\delta\theta_{s}\big]
\approx 2\gamma_{_{FV}}\theta_{s}\delta r, \label{eqt4}
\end{align}
where the variations in $\gamma_{_{FV}}$, $\gamma_{_{FS}}$ and $\gamma_{_{VS}}$ have been neglected,
as their changes are of $\mathcal{O}(\delta\phi^{2})$.

The corresponding bulk-energy change is obtained by expanding about the minima $\pm\beta_{_{\theta}}$:
\begin{align}
\delta W_{bulk}\approx \frac{\lambda_{m}}{\varepsilon}\int_{\Omega}\delta F(\phi)\mathrm{d}\bm{x}
\approx\frac{\lambda_{m}SF''(\beta_{_{\theta}})}{2\varepsilon}\delta\phi^{2}
=\frac{\gamma_{_{FV}}SF''(\beta_{_{\theta}})}{2\varepsilon c_{_{F}}}\delta\phi^{2}
\approx\frac{8\gamma_{_{FV}}\beta_{_{\theta}}^{2}F''(\beta_{_{\theta}})\theta_{s}^{2}r_{_{0}}^{2}}{\varepsilon c_{_{F}}S}\delta r^{2}.\label{eqt5}
\end{align}

Combining Eqs.~\eqref{eqt4}-\eqref{eqt5}, we write the variation of the total free energy as:
\begin{equation}
\delta W=\delta W_{int}+\delta W_{bulk}\approx \gamma_{_{FV}}
\left(2\theta_{s}\delta r+\frac{8\beta_{_{\theta}}^{2}F''(\beta_{_{\theta}})\theta_{s}^{2}r_{_{0}}^{2}}{\varepsilon c_{_{F}}S}\delta r^{2}\right).
\label{eqt6}
\end{equation}
Given that $\frac{\partial(\delta W)}{\partial(\delta r)}|_{\delta r=0}=2\theta_{s}\gamma_{_{FV}}>0$
holds for $\theta_{s}\in (0,\pi)$, the total energy decreases when the film shrinks ($\delta r <0$).
The physical basis for this behavior lies in the relative scaling of energy contributions:
while the increase in bulk energy is $\mathcal{O}(\delta r^{2})$,
the reduction in interfacial energy scales as $\mathcal{O}(\delta r)$, making shrinkage energetically favorable.
Moreover, the condition $\frac{\partial(\delta W)}{\partial(\delta r)}=0$ characterizes the energy-minimizing state,

which gives the radius-contraction estimate within this restricted family:
\begin{equation}
\delta r=-\frac{c_{_{F}}}{8\beta_{_{\theta}}^{2}F''(\beta_{_{\theta}})\theta_{s}}\frac{\varepsilon S}{r_{_{0}}^{2}}.\label{eqt7}
\end{equation}

This estimate retains only leading-order terms in $\delta r/r_{_0}$ and $\delta\phi$ and requires $|\delta r|\ll r_{_0}$. It is not intended to predict complete film disappearance.

\begin{remark}\label{remdr}

Numerical evaluation of the prefactor in \eqref{eqt7} indicates that the predicted contraction increases with $\theta$ and tends to zero as $\theta\to0^{+}$. For identical geometric parameters, the logarithmic potential gives a smaller predicted contraction than the polynomial potential when $\theta<0.6948$, with the crossover value determined numerically. At fixed $r_{_0}$, $S$, and $\varepsilon$, the estimate is inversely proportional to $\theta_s$ for $0<\theta_s<\pi$. Reducing $\varepsilon$ or the domain area relative to $r_{_0}^{2}$ reduces the predicted shrinkage, provided the interface remains resolved and the assumptions of the estimate remain valid.
\end{remark}

\section{Upwind Lagrange multiplier approach}
\label{sec3}

The preceding analysis separates the structural laws of the model from its finite-width shrinkage. To retain those laws at the discrete level, we approximate \eqref{eqo11}--\eqref{eqo12} using bulk and wall Lagrange multipliers, upwind finite-volume fluxes, and ghost values for the boundary conditions. The multipliers enforce discrete energy identities, while the flux construction controls the phase-field bounds. The wall ghost relation couples these components and supplies the boundary dissipation term in the energy estimate.

First, applying the Lagrange multiplier approach to the governing equations~\eqref{eqo11}-\eqref{eqo12} yields
\begin{equation}
\left\{\begin{array}{l}
\frac{\partial\phi}{\partial t}=-\nabla\cdot\bm{J},\\[1mm]
\bm{J}=-M(\phi)\nabla\mu,\\[1mm]
\mu=-\varepsilon^2\Delta\phi+\xi(t)F'(\phi),
\end{array}\right.\quad \text{in}~\Omega\times(0,T],\label{eqn1}	
\end{equation}
subject to the following dynamic contact line boundary conditions
\begin{align}
\begin{cases}
\frac{\partial \phi}{\partial t}=-\kappa\left(\varepsilon^2 \frac{\partial \phi}{\partial \boldsymbol{n}}+\eta(t) g'(\phi)\right), \quad &\frac{\partial \mu}{\partial \boldsymbol{n}} = 0,\quad \text{on}~\Gamma_{w}, \\[1mm]
\varepsilon^2 \frac{\partial \phi}{\partial \boldsymbol{n}}=0, \quad 	&\frac{\partial \mu}{\partial \boldsymbol{n}} = 0,\quad \text{on} ~\Gamma_{n},
\end{cases} \label{eqn2}
\end{align}
with
\begin{equation}
\frac{\mathrm{d}}{\mathrm{d}t}\int_{\Omega}F(\phi) \mathrm{d}\bm{x}
=\xi(t)\int_{\Omega}F'(\phi)\frac{\partial\phi}{\partial t}\mathrm{d}\bm{x}\quad\text{and}\quad
\frac{\mathrm{d}}{\mathrm{d}t}\int_{\Gamma_{w}}g(\phi) \mathrm{d}s
=\eta(t)\int_{\Gamma_{w}}g'(\phi)\frac{\partial\phi}{\partial t} \mathrm{d}s,\label{eqn3}
\end{equation}

where $\xi(t)$ and $\eta(t)$ are scalar Lagrange multipliers. The choice $\xi(t)=\eta(t)=1$ recovers the original continuous system and satisfies \eqref{eqn3} by the chain rule. These identities motivate the discrete constraints below; they do not uniquely determine a multiplier when its associated energy derivative vanishes.

The multiplier identities address the energy balance; to control the phase-field bounds, we next construct an upwind flux. Define the positive and negative parts
\begin{equation}
\chi^{+}=\max\{\chi,0\},\quad\chi^{-}=\min\{\chi,0\},\label{eqn4}
\end{equation}

and, for $k=1$ in \eqref{eqo13}, define the two-state upwind mobility by
\begin{equation}
M(\chi_{1},\chi_{2})=(1+\chi_{1})^{+}(1-\chi_{2})^{+}.\label{eqn5}
\end{equation}

For equal states in $[-1,1]$, this definition satisfies $M(\phi,\phi)=1-\phi^2$. The positive parts specify the numerical flux and are not a post-processing cut-off of the phase variable.

Combining this mobility with the multiplier formulation gives the following finite-volume discretization of \eqref{eqn1}--\eqref{eqn3}.
Starting with the two-dimensional case,
we divide the computational domain $\overline{\Omega}\,(\triangleq\Omega\cup\partial\Omega)$ into $N_{x}\times N_{y}$ cells $C_{i,j}=[x_{i-\frac{1}{2}},x_{i+\frac{1}{2}}]\times[y_{j-\frac{1}{2}},y_{j+\frac{1}{2}}],\,i=1,2,\cdots,N_{x},\, j=1,2,\cdots,N_{y}$,
with spatial steps $\Delta x$ and $\Delta y$.
In each cell $C_{i,j}$, the corresponding cell average $\phi_{i,j}$ is defined as
\begin{equation}
\phi_{i,j}(t)=\frac{1}{\Delta x\Delta y}\iint_{C_{i,j}}\phi(x,y,t)\mathrm{d}x\mathrm{d}y.\label{eqn6}
\end{equation}
Applying the backward Euler method in time and the finite-volume method in space,
the continuous system~\eqref{eqn1}-\eqref{eqn2} is approximated as
\begin{align}
&\phi_{i,j}^{n+1}-\phi_{i,j}^{n}=-\left[\frac{\Delta t}{\Delta x}\left(J_{i+\frac{1}{2},j}^{n+1}-J_{i-\frac{1}{2},j}^{n+1}\right)+\frac{\Delta t}{\Delta y}\left(J_{i,j+\frac{1}{2}}^{n+1}-J_{i,j-\frac{1}{2}}^{n+1}\right)\right],\label{eqn7}\\[1mm]
&J_{i+\frac{1}{2},j}^{n+1}=\left(V_{i+\frac{1}{2},j}^{n+1}\right)^{+}M(\phi_{i,j}^{n+1},\phi_{i+1,j}^{n+1})
+\left(V_{i+\frac{1}{2},j}^{n+1}\right)^{-}M(\phi_{i+1,j}^{n+1},\phi_{i,j}^{n+1}),\label{eqn8}\\[1mm]
&V_{i+\frac{1}{2},j}^{n+1}=-\frac{1}{\Delta x}\left(\mu_{i+1,j}^{n+1}-\mu_{i,j}^{n+1}\right),	\label{eqn9}\\[1mm]
&J_{i,j+\frac{1}{2}}^{n+1}=\left(V_{i,j+\frac{1}{2}}^{n+1}\right)^{+}M(\phi_{i,j}^{n+1},\phi_{i,j+1}^{n+1})
+\left(V_{i,j+\frac{1}{2}}^{n+1}\right)^{-}M(\phi_{i,j+1}^{n+1},\phi_{i,j}^{n+1}),\label{eqn10}\\[1mm]
&V_{i,j+\frac{1}{2}}^{n+1}=-\frac{1}{\Delta y}\left(\mu_{i,j+1}^{n+1}-\mu_{i,j}^{n+1}\right),\label{eqn11}\\[1mm]
&\mu_{i,j}^{n+1}=-\varepsilon^2(\Delta\phi)_{i,j}^{n+1}+\xi^{n+1}F'(\phi_{i,j}^{n+1}),
\label{eqn12}
\end{align}
subject to the following dynamic contact line boundary conditions
\begin{align}
\begin{cases}
	\phi_{i,1}^{n+1}-\phi_{i,1}^{n}=-\kappa\Delta t\left(-\varepsilon^2 \left(\partial_{y}\phi\right)_{i,\frac{1}{2}}^{n+1}+\eta^{n+1}g'(\phi_{i,1}^{n+1})\right), \quad & \text{on}~\Gamma_{w}, \\[1mm]
	\left(\partial_{x}\phi\right)_{\frac{1}{2},j}^{n+1}=0, \quad\left(\partial_{x}\phi\right)_{N_{x}+\frac{1}{2},j}^{n+1}=0, \quad  \left(\partial_{y}\phi\right)_{i,N_{y}+\frac{1}{2}}^{n+1}=0, \quad 	& \text{on}~\Gamma_{n},
\end{cases} \label{eqn15}
\end{align}
and no-flux boundary conditions implemented by
\begin{equation}
J_{i,\frac{1}{2}}^{n+1} = 0,\quad
J_{\frac{1}{2},j}^{n+1} = 0,\quad J_{N_{x}+\frac{1}{2},j}^{n+1} = 0,\quad	J_{i,N_{y}+\frac{1}{2}}^{n+1} = 0.\label{eqn16}
\end{equation}
The energy identities in \eqref{eqn3} are discretized as follows:
\begin{align} &\sum_{i=1}^{N_{x}}\sum_{j=1}^{N_{y}}\left(F(\phi_{i,j}^{n+1})-F(\phi_{i,j}^{n})\right)=\xi^{n+1}\sum_{i=1}^{N_{x}}\sum_{j=1}^{N_{y}}F'(\phi_{i,j}^{n+1})(\phi_{i,j}^{n+1}-\phi_{i,j}^{n}),\label{eqn13}\\	&\sum_{i=1}^{N_{x}}\left(g(\phi_{i,1}^{n+1})-g(\phi_{i,1}^{n})\right)=\eta^{n+1}\sum_{i=1}^{N_{x}}g'(\phi_{i,1}^{n+1})\left(\phi_{i,1}^{n+1}-\phi_{i,1}^{n}\right).\label{eqn14}
\end{align}

Here $\Delta t>0$ is the time step and $t^n=n\Delta t$, with $n=0,\ldots,N$ and $T=N\Delta t$. Superscripts denote time levels; cell and face indices specify the locations of the phase variable, chemical potential, and fluxes. The multipliers $\xi^{n+1}$ and $\eta^{n+1}$ are spatially uniform scalars.

\begingroup\emergencystretch=1em

To impose the boundary conditions, ghost values $\phi_{0,j}^{n+1}$, $\phi_{N_{x}+1,j}^{n+1}$, $\phi_{i,0}^{n+1}$, $\phi_{i,N_{y}+1}^{n+1}$
($i=1,2,\cdots,N_{x},\, j=1,2,\cdots,N_{y}$) are introduced outside the boundary $\partial\Omega$.
Using the central difference scheme to discretize the spatial derivatives in \eqref{eqn15}, the following relations are obtained:
\par\endgroup
\begin{align}
\begin{cases}
\phi_{0,j}^{n+1}=\phi_{1,j}^{n+1},& \phi_{N_{x}+1,j}^{n+1}=\phi_{N_{x},j}^{n+1},\quad j=1,2,\cdots,N_{y},\\[1.5mm]
\phi_{i,0}^{n+1}=\left(1-\frac{\Delta y}{\varepsilon^{2}\kappa\Delta t}\right)\phi_{i,1}^{n+1}
  +\frac{\Delta y}{\varepsilon^{2}\kappa\Delta t}\phi_{i,1}^{n}-\frac{\Delta y}{\varepsilon^{2}}\eta^{n+1}g'(\phi_{i,1}^{n+1}),&
\phi_{i,N_{y}+1}^{n+1}=\phi_{i,N_{y}}^{n+1},\quad ~i=1,2,\cdots,N_{x}.	
\end{cases}\label{eqghost}
\end{align}
Therefore, for all $i=1,2,\cdots,N_{x}$ and $j=1,2,\cdots,N_{y}$,
the first partial derivatives and Laplacian can be discretized as
\begin{equation} (\partial_{x}\phi)_{i+\frac{1}{2},j}^{n+1}=\frac{\phi_{i+1,j}^{n+1}-\phi_{i,j}^{n+1}}{\Delta x}\quad\text{and}\quad
(\partial_{y}\phi)_{i,j+\frac{1}{2}}^{n+1}=\frac{\phi_{i,j+1}^{n+1}-\phi_{i,j}^{n+1}}{\Delta y},\label{eqn17}
\end{equation}
and
\begin{align}
(\Delta\phi)_{i,j}^{n+1}=(\partial_{x}^{2}\phi)_{i,j}^{n+1}+(\partial_{y}^{2}\phi)_{i,j}^{n+1}
=\frac{\phi_{i+1,j}^{n+1}-2\phi_{i,j}^{n+1}+\phi_{i-1,j}^{n+1}}{\Delta x^{2}}
  +\frac{\phi_{i,j+1}^{n+1}-2\phi_{i,j}^{n+1}+\phi_{i,j-1}^{n+1}}{\Delta y^{2}}\label{eqn19},
\end{align}
respectively.

With the fluxes and boundary treatment specified, we can now verify that their coupling retains the desired structure. The following results establish boundedness, mass conservation, and energy dissipation for solutions of \eqref{eqn7}--\eqref{eqn19}. These are structural statements about the discrete equations; they do not establish existence or uniqueness of the nonlinear update, or convergence of a particular nonlinear solver.

\begin{thm}\label{thm1} (Boundedness)
The fully discrete scheme \eqref{eqn7}-\eqref{eqn19} 
preserves strict bounds on the cell averages $\phi_{i,j}$.
That is, for all $i,j$, if $|\phi_{i,j}^{n}|<1$, then $|\phi_{i,j}^{n+1}|<1$.
\end{thm}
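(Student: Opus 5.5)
We argue by contradiction, separately for the upper bound $\phi^{n+1}_{i,j}<1$ and the lower bound $\phi^{n+1}_{i,j}>-1$. The argument follows the upwind-flux structure of Huang et al.~\cite{Huang23b}. The key observation is that the mass update \eqref{eqn7} involves only the fluxes $J$; the wall condition \eqref{eqn15} enters only through $\mu$ via the ghost values \eqref{eqghost}. Since no sign information about $\mu$ is needed, the dynamic contact line treatment should not interfere with the bound.

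For the upper bound, suppose $\max_{i,j}\phi^{n+1}_{i,j}\ge 1$ and choose a cell $(i_0,j_0)$ attaining this maximum, so $\phi^{n+1}_{i_0,j_0}\ge 1$. We examine the four face fluxes of $C_{i_0,j_0}$.

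At the east face, by \eqref{eqn8},
\[
J_{i_0+\frac12,j_0}=V^+\,(1+\phi_{i_0,j_0})^+(1-\phi_{i_0+1,j_0})^+ + V^-\,(1+\phi_{i_0+1,j_0})^+(1-\phi_{i_0,j_0})^+ .
\]
The second term vanishes because $(1-\phi_{i_0,j_0})^+=0$. The first term is $\ge 0$ because $V^+\ge0$. Hence the outgoing flux satisfies $J_{i_0+\frac12,j_0}\ge0$.

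At the west face, the flux is
\[
J_{i_0-\frac12,j_0}=V^+(1+\phi_{i_0-1,j_0})^+(1-\phi_{i_0,j_0})^+ + V^-(1+\phi_{i_0,j_0})^+(1-\phi_{i_0-1,j_0})^+ .
\]
The first term is $0$, and the second is $\le0$, so the incoming flux satisfies $J_{i_0-\frac12,j_0}\le0$. The $y$-direction faces are treated in the same way.

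Boundary faces carry zero flux by \eqref{eqn16}. This is where one checks that the wall row $j=1$ behaves like any other: the flux $J_{i,\frac12}=0$ is imposed independently of the ghost value $\phi_{i,0}$, so the wall condition never appears in \eqref{eqn7}.

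Substituting these signs into \eqref{eqn7} gives $\phi^{n+1}_{i_0,j_0}-\phi^n_{i_0,j_0}\le 0$. Therefore $\phi^{n+1}_{i_0,j_0}\le\phi^n_{i_0,j_0}<1$, which contradicts $\phi^{n+1}_{i_0,j_0}\ge 1$. The lower bound follows by the symmetric argument at a minimizing cell with $\phi^{n+1}\le -1$. There the factors $(1+\phi_{i_0,j_0})^+$ vanish, the signs of the fluxes reverse, and we get $\phi^{n+1}\ge\phi^n>-1$.

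The main obstacle is a subtlety rather than the estimate itself. The chemical potential \eqref{eqn12} contains $F'(\phi)$, which is undefined for $|\phi|\ge1$, so a contradiction hypothesis with $\phi^{n+1}_{i_0,j_0}\ge1$ is formally meaningless for the logarithmic potential. I would handle this by noting that the sign argument uses only that $V$ is some real number, not its value. So either the statement is read as saying that any solution with well-defined $\mu^{n+1}$ automatically lies in $(-1,1)$, or one applies the argument to the flux equations with $V$ treated as given. Under either reading, the contradiction shows no cell can reach $\pm1$.

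I would also remark that the argument needs no restriction on $\Delta t$, on $\xi^{n+1}$, or on $\eta^{n+1}$. For this reason, the ghost relation \eqref{eqghost}, and the possibility that the ghost value $\phi_{i,0}$ lies outside $[-1,1]$, are harmless: the ghost values enter only $\mu$ and never the mobility factors.
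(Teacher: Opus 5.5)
Your argument is correct and is essentially the paper's proof. The paper sums \eqref{eqn7} over a rectangular block of cells with $\phi^{n+1}_{i,j}\ge 1$ and uses the upwind mobility \eqref{eqn5} to show that the telescoped boundary fluxes make the right-hand side nonpositive; it explicitly notes that a single cell suffices, which is exactly your face-by-face sign check. Two small points: choosing a maximizing cell is never actually used, since only $(1-\phi^{n+1}_{i_0,j_0})^+=0$ enters the sign argument, and your observation that $F'$ is undefined for $|\phi|\ge1$ (so the statement concerns solutions with well-defined $\mu^{n+1}$) is something the paper leaves implicit.
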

		
\begin{proof}
We first prove that $|\phi_{i,j}^{n}|<1$ implies $\phi_{i,j}^{n+1}<1$ for all $i,j$.
Suppose, to the contrary, that there exists a group of contiguous cells $\{\phi_{i,j}^{n+1}:\alpha\le i\le k,\ \beta\le j\le l\}$ such that $\phi_{i,j}^{n+1}\ge1$. Note that the proof remains valid if the group has only one point, i.e., $\alpha=i=k$, $\beta=j=l$.
Then, summing both sides of \eqref{eqn7} over these cells yields
\begin{align}
0<&\,\frac{1}{\Delta t}\sum_{i=\alpha}^{k}\sum_{j=\beta}^{l}(\phi_{i,j}^{n+1}-\phi_{i,j}^{n})
=-\frac{1}{\Delta x}\sum_{i=\alpha}^{k}\sum_{j=\beta}^{l}\left(J_{i+\frac{1}{2},j}^{n+1}-J_{i-\frac{1}{2},j}^{n+1}\right)-\frac{1}{\Delta y}\sum_{i=\alpha}^{k}\sum_{j=\beta}^{l}\left(J_{i,j+\frac{1}{2}}^{n+1}-J_{i,j-\frac{1}{2}}^{n+1}\right)\nonumber\\
=&\,\frac{1}{\Delta x}\sum_{j=\beta}^{l}\left(J_{\alpha-\frac{1}{2},j}^{n+1}-J_{k+\frac{1}{2},j}^{n+1}\right)+\frac{1}{\Delta y}\sum_{i=\alpha}^{k}\left(J_{i,\beta-\frac{1}{2}}^{n+1}-J_{i,l+\frac{1}{2}}^{n+1}\right)\nonumber\\
=&\,\frac{1}{\Delta x}\sum_{j=\beta}^{l}\left(\left(V_{\alpha-\frac{1}{2},j}^{n+1}\right)^{+}M(\phi_{\alpha-1,j}^{n+1},\phi_{\alpha,j}^{n+1})
+\left(V_{\alpha-\frac{1}{2},j}^{n+1}\right)^{-}M(\phi_{\alpha,j}^{n+1},\phi_{\alpha-1,j}^{n+1})\right)\nonumber\\
&-\frac{1}{\Delta x}\sum_{j=\beta}^{l}\left(\left(V_{k+\frac{1}{2},j}^{n+1}\right)^{+}M(\phi_{k,j}^{n+1},\phi_{k+1,j}^{n+1})
+\left(V_{k+\frac{1}{2},j}^{n+1}\right)^{-}M(\phi_{k+1,j}^{n+1},\phi_{k,j}^{n+1})\right)\nonumber\\
&+\frac{1}{\Delta y}\sum_{i=\alpha}^{k}\left(\left(V_{i,\beta-\frac{1}{2}}^{n+1}\right)^{+}M(\phi_{i,\beta-1}^{n+1},\phi_{i,\beta}^{n+1})
+\left(V_{i,\beta-\frac{1}{2}}^{n+1}\right)^{-}M(\phi_{i,\beta}^{n+1},\phi_{i,\beta-1}^{n+1})\right)\nonumber\\
&-\frac{1}{\Delta y}\sum_{i=\alpha}^{k}\left(\left(V_{i,l+\frac{1}{2}}^{n+1}\right)^{+}M(\phi_{i,l}^{n+1},\phi_{i,l+1}^{n+1})
+\left(V_{i,l+\frac{1}{2}}^{n+1}\right)^{-}M(\phi_{i,l+1}^{n+1},\phi_{i,l}^{n+1})\right).\label{tha1}
\end{align}

By the definition \eqref{eqn5} and the inequalities $\phi_{i,\beta}^{n+1}\geq 1,\,\phi_{i,l}^{n+1}\geq 1,\,\phi_{\alpha,j}^{n+1}\geq 1 $ and $\phi_{k,j}^{n+1}\geq 1,$ we have
\begin{align}
&M(\phi_{\alpha-1,j}^{n+1},\phi_{\alpha,j}^{n+1})=0,\quad M(\phi_{\alpha,j}^{n+1},\phi_{\alpha-1,j}^{n+1})\geq0,\quad
M(\phi_{k,j}^{n+1},\phi_{k+1,j}^{n+1})\geq0,\quad M(\phi_{k+1,j}^{n+1},\phi_{k,j}^{n+1})=0,\nonumber\\[1mm]
&M(\phi_{i,\beta-1}^{n+1},\phi_{i,\beta}^{n+1})=0,\quad M(\phi_{i,\beta}^{n+1},\phi_{i,\beta-1}^{n+1})\geq0,\quad
M(\phi_{i,l}^{n+1},\phi_{i,l+1}^{n+1})\geq0,\quad M(\phi_{i,l+1}^{n+1},\phi_{i,l}^{n+1})=0,
\label{tha2}
\end{align}
where $i=\alpha,\alpha+1,\alpha+2,\cdots,k$ and $ j=\beta,\beta+1,\beta+2,\cdots,l.$
Therefore, the right-hand side of \eqref{tha1} must be non-positive, which contradicts the strict positivity of the left-hand side. Hence $\phi_{i,j}^{n+1}<1$.

The analogous argument at the lower bound gives $\phi_{i,j}^{n+1}>-1$.
\end{proof}


\begin{thm}\label{thm2} (Mass conservation)
The fully discrete scheme \eqref{eqn7}-\eqref{eqn19} ensures that the total mass is conserved during the evolution, i.e.,
\begin{equation}
m^{n+1}\triangleq\sum_{i=1}^{N_{x}}\sum_{j=1}^{N_{y}}\phi_{i,j}^{n+1}=\sum_{i=1}^{N_{x}}\sum_{j=1}^{N_{y}}\phi_{i,j}^{n}=\cdots
=\sum_{i=1}^{N_{x}}\sum_{j=1}^{N_{y}}\phi_{i,j}^{0}.\label{thb1}
\end{equation}
\end{thm}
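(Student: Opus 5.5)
The plan is to sum the update \eqref{eqn7} over every cell and check that the flux differences telescope down to boundary fluxes, which vanish by \eqref{eqn16}.

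\emph{Step 1 (split the sum).} Summing \eqref{eqn7} over $i=1,\dots,N_x$ and $j=1,\dots,N_y$ gives
\begin{equation*}
m^{n+1}-m^{n}=-\frac{\Delta t}{\Delta x}\sum_{j=1}^{N_y}\sum_{i=1}^{N_x}\left(J_{i+\frac12,j}^{n+1}-J_{i-\frac12,j}^{n+1}\right)-\frac{\Delta t}{\Delta y}\sum_{i=1}^{N_x}\sum_{j=1}^{N_y}\left(J_{i,j+\frac12}^{n+1}-J_{i,j-\frac12}^{n+1}\right).
\end{equation*}

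\emph{Step 2 (telescope).} For each fixed $j$, the inner $x$-sum collapses to $J_{N_x+\frac12,j}^{n+1}-J_{\frac12,j}^{n+1}$. For each fixed $i$, the $y$-sum collapses to $J_{i,N_y+\frac12}^{n+1}-J_{i,\frac12}^{n+1}$.

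\emph{Step 3 (apply the no-flux conditions).} By \eqref{eqn16}, all four of these boundary fluxes are zero, including the substrate face $J_{i,\frac12}^{n+1}$. Hence $m^{n+1}=m^n$, and induction on $n$ gives \eqref{thb1}.

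\emph{Main point to check.} The one step needing care is the substrate boundary. The dynamic contact line relation \eqref{eqn15}, through the ghost value $\phi_{i,0}^{n+1}$ in \eqref{eqghost}, enters only the chemical potential $\mu_{i,1}^{n+1}$ via the discrete Laplacian \eqref{eqn19}. It does not enter the cell-balance equation \eqref{eqn7}. The flux across $\Gamma_w$ is set to zero independently by \eqref{eqn16}, consistent with $\partial\mu/\partial\bm n=0$.

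So the wall relaxation changes $\phi_{i,1}$ only through the energy and $\mu$, and creates no mass source. The interior interface fluxes need no assumption on their sign, on the upwind structure, or on the multipliers $\xi^{n+1},\eta^{n+1}$. They cancel exactly because each face flux $J_{i+\frac12,j}$ is a single value shared by its two neighbouring cells.
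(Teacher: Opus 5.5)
Your proposal is correct and follows essentially the paper's own proof: sum \eqref{eqn7} over all cells, telescope the flux differences to the boundary faces, and use the no-flux conditions \eqref{eqn16} to get $m^{n+1}=m^{n}$. Your observation that the wall relation enters only through the ghost value $\phi_{i,0}^{n+1}$ in $\mu_{i,1}^{n+1}$, and not through \eqref{eqn7}, is a reasonable clarification that the paper leaves implicit.
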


\begin{proof}
Summing both sides of \eqref{eqn7} over all cells $C_{i,j}$ yields
\begin{align}
\sum_{i=1}^{N_{x}}\sum_{j=1}^{N_{y}}(\phi_{i,j}^{n+1}-\phi_{i,j}^{n})
=&\,-\frac{\Delta t}{\Delta x}\sum_{i=1}^{N_{x}}\sum_{j=1}^{N_{y}}\left(J_{i+\frac{1}{2},j}^{n+1}-J_{i-\frac{1}{2},j}^{n+1}\right)-\frac{\Delta t}{\Delta y}\sum_{i=1}^{N_{x}}\sum_{j=1}^{N_{y}}\left(J_{i,j+\frac{1}{2}}^{n+1}-J_{i,j-\frac{1}{2}}^{n+1}\right)\nonumber\\
=&\,-\frac{\Delta t}{\Delta x}\sum_{j=1}^{N_{y}}\left(J_{{N_{x}+\frac{1}{2},j}}^{n+1}-J_{\frac{1}{2},j}^{n+1}\right)-\frac{\Delta t}{\Delta y}\sum_{i=1}^{N_{x}}\left(J_{i,{N_{y}+\frac{1}{2}}}^{n+1}-J_{i,\frac{1}{2}}^{n+1}\right)=0,\label{thb2}
\end{align}

where the last equality follows from the no-flux conditions \eqref{eqn16}. The physical discrete mass includes the constant cell-area factor $\Delta x\Delta y$, which is omitted from $m^n$.
\end{proof}

\begin{thm}\label{thm3} (Energy dissipation)
The fully discrete scheme \eqref{eqn7}-\eqref{eqn19} is energy stable and satisfies the following discrete energy dissipation law:
\begin{align}
	\frac{\mathcal{W}^{n+1}-\mathcal{W}^{n}}{\Delta t}
	\le&\, -\Delta x\Delta y\sum_{i=1}^{N_{x}-1}\sum_{j=1}^{N_{y}}
	\min\left\{M(\phi_{i,j}^{n+1},\phi_{i+1,j}^{n+1}),M(\phi_{i+1,j}^{n+1},\phi_{i,j}^{n+1})\right\}
	\left|V_{i+\frac{1}{2},j}^{n+1}\right|^{2}\nonumber\\
	&-\Delta x\Delta y\sum_{i=1}^{N_{x}}\sum_{j=1}^{N_{y}-1}
	\min\left\{M(\phi_{i,j}^{n+1},\phi_{i,j+1}^{n+1}),M(\phi_{i,j+1}^{n+1},\phi_{i,j}^{n+1})\right\}
	\left|V_{i,j+\frac{1}{2}}^{n+1}\right|^{2}
	\le 0,\label{thc1}
\end{align}
where
\begin{align}
\mathcal{W}^{n}
=&\,\Delta x\Delta y\sum_{i=1}^{N_{x}-1}\sum_{j=1}^{N_{y}}\frac{\varepsilon^{2}}{2}\left(\frac{\phi_{i+1,j}^{n}-\phi_{i,j}^{n}}{\Delta x}\right)^{2}
  +\Delta x\Delta y\sum_{i=1}^{N_{x}}\sum_{j=1}^{N_{y}-1}\frac{\varepsilon^{2}}{2}\left(\frac{\phi_{i,j+1}^{n}-\phi_{i,j}^{n}}{\Delta y}\right)^{2}
\nonumber\\
&+\Delta x\Delta y\sum_{i=1}^{N_{x}}\sum_{j=1}^{N_{y}}F(\phi_{i,j}^{n})+\Delta x\sum_{i=1}^{N_{x}}g(\phi_{i,1}^{n}).\label{thc2}
\end{align}
\end{thm}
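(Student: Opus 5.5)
The plan is to test the update \eqref{eqn7} against the discrete chemical potential, i.e.\ multiply \eqref{eqn7} by $\Delta x\Delta y\,\mu_{i,j}^{n+1}/\Delta t$ and sum over all cells. Two things must then be checked. First, the left-hand side should reproduce the energy difference, up to a nonpositive remainder. Second, the right-hand side should, after summation by parts, give the nonpositive flux term in \eqref{thc1}.

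\textbf{Flux side.} Using the no-flux conditions \eqref{eqn16}, the interior summation by parts gives
\[
-\Delta x\Delta y\sum_{i=1}^{N_x-1}\sum_{j}J^{n+1}_{i+\frac12,j}\,V^{n+1}_{i+\frac12,j}
\]
plus the analogous $y$-term, where the definitions \eqref{eqn9}, \eqref{eqn11} of $V$ have been inserted. By \eqref{eqn8},
\[
J_{i+\frac12,j}V_{i+\frac12,j}=(V^+)^2M(\phi_{i,j},\phi_{i+1,j})+(V^-)^2M(\phi_{i+1,j},\phi_{i,j}).
\]
This is at least $\min\{M(\cdot,\cdot),M(\cdot,\cdot)\}\,|V|^2$. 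The key input is that $M\ge0$ by the positive parts in \eqref{eqn5}, and Theorem~\ref{thm1} may also be invoked to know that the arguments lie in $(-1,1)$. This yields exactly the right-hand side of \eqref{thc1}.

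\textbf{Energy side.} Expand $\sum\mu_{i,j}^{n+1}(\phi_{i,j}^{n+1}-\phi_{i,j}^n)$ using \eqref{eqn12}.

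For the potential part, the multiplier identity \eqref{eqn13} converts $\xi^{n+1}\sum F'(\phi^{n+1})(\phi^{n+1}-\phi^n)$ into $\sum(F(\phi^{n+1})-F(\phi^n))$ exactly.

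For the Laplacian part, apply discrete summation by parts with the ghost values \eqref{eqghost}. The Neumann ghosts on $\Gamma_n$ kill their boundary terms. The interior gradient terms give
\[
\frac{\varepsilon^2}{2}\sum\Big(|D\phi^{n+1}|^2-|D\phi^n|^2+|D(\phi^{n+1}-\phi^n)|^2\Big),
\]
using $a(a-b)=\tfrac12(a^2-b^2+(a-b)^2)$. The extra square is a nonnegative numerical dissipation, and it is dropped to obtain the inequality.

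\textbf{Main obstacle: the wall row $j=1$.} Here the summation by parts leaves the boundary term
\[
-\varepsilon^2\frac{\Delta x}{\Delta y}\sum_i(\phi_{i,1}^{n+1}-\phi_{i,0}^{n+1})(\phi_{i,1}^{n+1}-\phi_{i,1}^n),
\]
scaled consistently with $\Delta x\Delta y/\Delta y^2$. The step is to substitute the ghost relation from \eqref{eqghost}, which is equivalent to the discrete wall condition in \eqref{eqn15}:
\[
-\varepsilon^2(\phi_{i,1}-\phi_{i,0})/\Delta y=-(\phi_{i,1}^{n+1}-\phi_{i,1}^n)/(\kappa\Delta t)-\eta^{n+1}g'(\phi_{i,1}^{n+1}).
\]
After substitution the term splits into two pieces. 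The first is $-\frac{\Delta x}{\kappa\Delta t}\sum_i(\phi_{i,1}^{n+1}-\phi_{i,1}^n)^2\le0$, the boundary dissipation. The second is $-\Delta x\,\eta^{n+1}\sum g'(\phi_{i,1}^{n+1})(\phi^{n+1}_{i,1}-\phi^n_{i,1})$, which by \eqref{eqn14} equals $-\Delta x\sum(g(\phi_{i,1}^{n+1})-g(\phi_{i,1}^n))$.

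The sign conventions are the delicate point. The ghost-cell Laplacian contribution at $j=1$ must produce the wall energy with the correct sign so that it moves to the left-hand side as $+\Delta x\sum g$. I would verify this carefully, since an index slip there would flip the boundary term.

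Collecting the pieces gives $(\mathcal W^{n+1}-\mathcal W^n)/\Delta t$ bounded above by the flux dissipation. Two nonpositive terms are dropped along the way: the gradient-increment square and the contact-line term.
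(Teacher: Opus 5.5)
Your proposal is correct and is essentially the paper's proof. The paper also expands the gradient-energy differences via $a^2-b^2=2a(a-b)-(a-b)^2$ with the Neumann ghosts. It then uses \eqref{eqghost} to split the leftover $j=1$ term into the contact-line dissipation $-\frac{1}{\kappa\Delta t\,\Delta y}\sum_i(\phi_{i,1}^{n+1}-\phi_{i,1}^{n})^2$ and a piece that exactly cancels the wall term supplied by \eqref{eqn14}. Finally, it bounds $\sum_{i,j}\mu_{i,j}^{n+1}(\phi_{i,j}^{n+1}-\phi_{i,j}^{n})$ by the min-mobility flux term. The only difference is that the paper runs the chain downward from $\mathcal{W}^{n+1}-\mathcal{W}^{n}$ rather than starting from the tested equation.

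The sign you flagged does work out, but you mix two conventions. Your minus sign on $\varepsilon^2\frac{\Delta x}{\Delta y}\sum_i(\phi_{i,1}^{n+1}-\phi_{i,0}^{n+1})(\phi_{i,1}^{n+1}-\phi_{i,1}^{n})$ is right in the paper's framing, where the energy difference is written through $-\varepsilon^2\sum_{i,j}(\Delta\phi)_{i,j}^{n+1}(\phi_{i,j}^{n+1}-\phi_{i,j}^{n})$ and the gradient-increment square also carries a minus sign. In your own expansion of $\Delta x\Delta y\sum_{i,j}\mu_{i,j}^{n+1}(\phi_{i,j}^{n+1}-\phi_{i,j}^{n})$, where that square appears with a plus sign, this boundary term appears with a plus sign as well. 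Pick one convention and use it throughout.
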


\begin{proof}
Subtracting the first term on the right-hand side of the discrete energy \eqref{thc2} at consecutive time levels
and using the ghost point relation \eqref{eqghost} together with the identity $a^{2}-b^{2}=2a(a-b)-(a-b)^{2}$, we obtain
\begin{align}
&\,\frac{\varepsilon^{2}}{2}\sum_{i=1}^{N_{x}-1}\sum_{j=1}^{N_{y}}\left[\left(\frac{\phi_{i+1,j}^{n+1}-\phi_{i,j}^{n+1}}{\Delta x}\right)^{2}
				-\left(\frac{\phi_{i+1,j}^{n}-\phi_{i,j}^{n}}{\Delta x}\right)^{2}\right]\nonumber\\
=&\,\varepsilon^{2}\sum_{i=1}^{N_{x}-1}\sum_{j=1}^{N_{y}}\left(\frac{\phi_{i+1,j}^{n+1}-\phi_{i,j}^{n+1}}{\Delta x^{2}}\right)\cdot
				\left[(\phi_{i+1,j}^{n+1}-\phi_{i+1,j}^{n})-(\phi_{i,j}^{n+1}-\phi_{i,j}^{n})\right]\nonumber\\
 &\,-\frac{\varepsilon^{2}}{2}\sum_{i=1}^{N_{x}-1}\sum_{j=1}^{N_{y}}\left[\left(\frac{\phi_{i+1,j}^{n+1}-\phi_{i,j}^{n+1}}{\Delta x}\right)
				-\left(\frac{\phi_{i+1,j}^{n}-\phi_{i,j}^{n}}{\Delta x}\right)\right]^{2}\nonumber\\
=&\,\varepsilon^{2}\sum_{i=2}^{N_{x}}\sum_{j=1}^{N_{y}}\left(\frac{\phi_{i,j}^{n+1}-\phi_{i-1,j}^{n+1}}{\Delta x^{2}}\right)
				\cdot(\phi_{i,j}^{n+1}-\phi_{i,j}^{n})
	-\varepsilon^{2}\sum_{i=1}^{N_{x}-1}\sum_{j=1}^{N_{y}}\left(\frac{\phi_{i+1,j}^{n+1}-\phi_{i,j}^{n+1}}{\Delta x^{2}}\right)
				\cdot(\phi_{i,j}^{n+1}-\phi_{i,j}^{n})\nonumber\\
&\,-\frac{\varepsilon^{2}}{2}\sum_{i=1}^{N_{x}-1}\sum_{j=1}^{N_{y}}\left[\left(\frac{\phi_{i+1,j}^{n+1}-\phi_{i,j}^{n+1}}{\Delta x}\right)
				-\left(\frac{\phi_{i+1,j}^{n}-\phi_{i,j}^{n}}{\Delta x}\right)\right]^{2}\nonumber\\
=&\,-\varepsilon^{2}\sum_{i=2}^{N_{x}-1}\sum_{j=1}^{N_{y}}
	\left(\frac{\phi_{i+1,j}^{n+1}-2\phi_{i,j}^{n+1}+\phi_{i-1,j}^{n+1}}{\Delta x^{2}}\right)\cdot(\phi_{i,j}^{n+1}-\phi_{i,j}^{n})\nonumber\\
&\,+\varepsilon^{2}\sum_{j=1}^{N_{y}}\left(\frac{\phi_{N_{x},j}^{n+1}-\phi_{N_{x}-1,j}^{n+1}}{\Delta x^{2}}\right)
				\cdot(\phi_{N_{x},j}^{n+1}-\phi_{N_{x},j}^{n})
	-\varepsilon^{2}\sum_{j=1}^{N_{y}}\left(\frac{\phi_{2,j}^{n+1}-\phi_{1,j}^{n+1}}{\Delta x^{2}}\right)
				\cdot(\phi_{1,j}^{n+1}-\phi_{1,j}^{n})\nonumber\\
&\,-\frac{\varepsilon^{2}}{2}\sum_{i=1}^{N_{x}-1}\sum_{j=1}^{N_{y}}\left[\left(\frac{\phi_{i+1,j}^{n+1}-\phi_{i,j}^{n+1}}{\Delta x}\right)
				-\left(\frac{\phi_{i+1,j}^{n}-\phi_{i,j}^{n}}{\Delta x}\right)\right]^{2}\nonumber\\
=&\,-\varepsilon^{2}\sum_{i=1}^{N_{x}}\sum_{j=1}^{N_{y}}(\partial_{x}^{2}\phi)_{i,j}^{n+1}\cdot(\phi_{i,j}^{n+1}-\phi_{i,j}^{n})
   -\frac{\varepsilon^{2}}{2}\sum_{i=1}^{N_{x}-1}\sum_{j=1}^{N_{y}}\left[\left(\frac{\phi_{i+1,j}^{n+1}-\phi_{i,j}^{n+1}}{\Delta x}\right)
				-\left(\frac{\phi_{i+1,j}^{n}-\phi_{i,j}^{n}}{\Delta x}\right)\right]^{2}.\label{thc3}
\end{align}
Applying a similar treatment to the second term on the right-hand side of the discrete energy \eqref{thc2} gives:
\begin{align}
&\,\frac{\varepsilon^{2}}{2}\sum_{i=1}^{N_{x}}\sum_{j=1}^{N_{y}-1}\left[\left(\frac{\phi_{i,j+1}^{n+1}-\phi_{i,j}^{n+1}}{\Delta y}\right)^{2}
	-\left(\frac{\phi_{i,j+1}^{n}-\phi_{i,j}^{n}}{\Delta y}\right)^{2}\right]\nonumber\\
=&\,-\varepsilon^{2}\sum_{i=1}^{N_{x}}\sum_{j=2}^{N_{y}-1}
	\left(\frac{\phi_{i,j+1}^{n+1}-2\phi_{i,j}^{n+1}+\phi_{i,j-1}^{n+1}}{\Delta y^{2}}\right)\cdot(\phi_{i,j}^{n+1}-\phi_{i,j}^{n})\nonumber\\
&+\varepsilon^{2}\sum_{i=1}^{N_{x}}\left(\frac{\phi_{i,N_{y}}^{n+1}-\phi_{i,N_{y}-1}^{n+1}}{\Delta y^{2}}\right)
			\cdot(\phi_{i,N_{y}}^{n+1}-\phi_{i,N_{y}}^{n})
-\varepsilon^{2}\sum_{i=1}^{N_{x}}\left(\frac{\phi_{i,2}^{n+1}-\phi_{i,1}^{n+1}}{\Delta y^{2}}\right)
        \cdot(\phi_{i,1}^{n+1}-\phi_{i,1}^{n})\nonumber\\
&-\frac{\varepsilon^{2}}{2}\sum_{i=1}^{N_{x}}\sum_{j=1}^{N_{y}-1}\left[\left(\frac{\phi_{i,j+1}^{n+1}-\phi_{i,j}^{n+1}}{\Delta y}\right)
			-\left(\frac{\phi_{i,j+1}^{n}-\phi_{i,j}^{n}}{\Delta y}\right)\right]^{2}\nonumber\\
=&\,-\varepsilon^{2}\sum_{i=1}^{N_{x}}\sum_{j=1}^{N_{y}}(\partial_{y}^{2}\phi)_{i,j}^{n+1}\cdot(\phi_{i,j}^{n+1}-\phi_{i,j}^{n})
   -\varepsilon^{2}\sum_{i=1}^{N_{x}}\left(\frac{\phi_{i,1}^{n+1}-\phi_{i,0}^{n+1}}{\Delta y^{2}}\right)
    \cdot\left(\phi_{i,1}^{n+1}-\phi_{i,1}^{n}\right)\nonumber\\
 &\,-\frac{\varepsilon^{2}}{2}\sum_{i=1}^{N_{x}}\sum_{j=1}^{N_{y}-1}\left[\left(\frac{\phi_{i,j+1}^{n+1}-\phi_{i,j}^{n+1}}{\Delta y}\right)
				-\left(\frac{\phi_{i,j+1}^{n}-\phi_{i,j}^{n}}{\Delta y}\right)\right]^{2}.\label{thc4}
\end{align}

Multiplying both sides of \eqref{eqn7} by $\mu_{i,j}^{n+1}$ and summing over all cells,
and using the no-flux boundary conditions \eqref{eqn16}, yields
\begin{align}			
&\sum_{i=1}^{N_{x}}\sum_{j=1}^{N_{y}}\left(\phi_{i,j}^{n+1}-\phi_{i,j}^{n}\right)\mu_{i,j}^{n+1}
=-\sum_{i=1}^{N_{x}}\sum_{j=1}^{N_{y}}\left[\frac{\Delta t}{\Delta x}\left(J_{i+\frac{1}{2},j}^{n+1}-J_{i-\frac{1}{2},j}^{n+1}\right)
   +\frac{\Delta t}{\Delta y}\left(J_{i,j+\frac{1}{2}}^{n+1}-J_{i,j-\frac{1}{2}}^{n+1}\right)\right]\mu_{i,j}^{n+1}\nonumber\\
=&\,\sum_{i=1}^{N_{x}-1}\sum_{j=1}^{N_{y}}\frac{\Delta t}{\Delta x}J_{i+\frac{1}{2},j}^{n+1}\left(\mu_{i+1,j}^{n+1}-\mu_{i,j}^{n+1}\right)
   +\sum_{i=1}^{N_{x}}\sum_{j=1}^{N_{y}-1}\frac{\Delta t}{\Delta y}J_{i,j+\frac{1}{2}}^{n+1}\left(\mu_{i,j+1}^{n+1}-\mu_{i,j}^{n+1}\right)\nonumber\\
=&\,-\Delta t\sum_{i=1}^{N_{x}-1}\sum_{j=1}^{N_{y}}J_{i+\frac{1}{2},j}^{n+1}V_{i+\frac{1}{2},j}^{n+1}
    -\Delta t\sum_{i=1}^{N_{x}}\sum_{j=1}^{N_{y}-1}J_{i,j+\frac{1}{2}}^{n+1}V_{i,j+\frac{1}{2}}^{n+1}\nonumber\\
=&\,-\Delta t\sum_{i=1}^{N_{x}-1}\sum_{j=1}^{N_{y}}\left[\left(V_{i+\frac{1}{2},j}^{n+1}\right)^{+}M(\phi_{i,j}^{n+1},\phi_{i+1,j}^{n+1})
   +\left(V_{i+\frac{1}{2},j}^{n+1}\right)^{-}M(\phi_{i+1,j}^{n+1},\phi_{i,j}^{n+1})\right]V_{i+\frac{1}{2},j}^{n+1}\nonumber\\
&-\Delta t\sum_{i=1}^{N_{x}}\sum_{j=1}^{N_{y}-1}\left[\left(V_{i,j+\frac{1}{2}}^{n+1}\right)^{+}M(\phi_{i,j}^{n+1},\phi_{i,j+1}^{n+1})
   +\left(V_{i,j+\frac{1}{2}}^{n+1}\right)^{-}M(\phi_{i,j+1}^{n+1},\phi_{i,j}^{n+1})\right]V_{i,j+\frac{1}{2}}^{n+1}\nonumber\\
\le&\, -\Delta t\sum_{i=1}^{N_{x}-1}\sum_{j=1}^{N_{y}}
      \min\left\{M(\phi_{i,j}^{n+1},\phi_{i+1,j}^{n+1}),M(\phi_{i+1,j}^{n+1},\phi_{i,j}^{n+1})\right\}
	\left|V_{i+\frac{1}{2},j}^{n+1}\right|^{2}\nonumber\\
&-\Delta t\sum_{i=1}^{N_{x}}\sum_{j=1}^{N_{y}-1}
			\min\left\{M(\phi_{i,j}^{n+1},\phi_{i,j+1}^{n+1}),M(\phi_{i,j+1}^{n+1},\phi_{i,j}^{n+1})\right\}
	\left|V_{i,j+\frac{1}{2}}^{n+1}\right|^{2}
\le0.\label{thc5}
\end{align}

Finally, by combining~\eqref{thc3}-\eqref{thc5} and using \eqref{eqn19}, we have
\begin{align}
\frac{\mathcal{W}^{n+1}-\mathcal{W}^{n}}{\Delta x\Delta y}
=&\,\frac{\varepsilon^{2}}{2}\sum_{i=1}^{N_{x}-1}\sum_{j=1}^{N_{y}}\left[\left(\frac{\phi_{i+1,j}^{n+1}-\phi_{i,j}^{n+1}}{\Delta x}\right)^{2}
			-\left(\frac{\phi_{i+1,j}^{n}-\phi_{i,j}^{n}}{\Delta x}\right)^{2}\right]\nonumber\\
 &\,+\frac{\varepsilon^{2}}{2}\sum_{i=1}^{N_{x}}\sum_{j=1}^{N_{y}-1}\left[\left(\frac{\phi_{i,j+1}^{n+1}-\phi_{i,j}^{n+1}}{\Delta y}\right)^{2}
	-\left(\frac{\phi_{i,j+1}^{n}-\phi_{i,j}^{n}}{\Delta y}\right)^{2}\right]\nonumber\\
 &\,+\sum_{i=1}^{N_{x}}\sum_{j=1}^{N_{y}}\left[F(\phi_{i,j}^{n+1})-F(\phi_{i,j}^{n})\right]
    +\frac{1}{\Delta y}\sum_{i=1}^{N_{x}}\left[g(\phi_{i,1}^{n+1})-g(\phi_{i,1}^{n})\right]\nonumber\\
\le&\,-\varepsilon^{2}\sum_{i=1}^{N_{x}}\sum_{j=1}^{N_{y}}\left[(\partial_{x}^{2}\phi)_{i,j}^{n+1}+(\partial_{y}^{2}\phi)_{i,j}^{n+1}\right](\phi_{i,j}^{n+1}-\phi_{i,j}^{n})\nonumber\\
&\,-\varepsilon^{2}\sum_{i=1}^{N_{x}}\left(\frac{\phi_{i,1}^{n+1}-\phi_{i,0}^{n+1}}{\Delta y^{2}}\right)\left(\phi_{i,1}^{n+1}-\phi_{i,1}^{n}\right)\nonumber\\
 &\,+\xi^{n+1}\sum_{i=1}^{N_{x}}\sum_{j=1}^{N_{y}}F'(\phi_{i,j}^{n+1})(\phi_{i,j}^{n+1}-\phi_{i,j}^{n})
    +\frac{1}{\Delta y}\eta^{n+1}\sum_{i=1}^{N_{x}}g'\left(\phi_{i,1}^{n+1}\right)\left(\phi_{i,1}^{n+1}-\phi_{i,1}^{n}\right)\nonumber\\
\le&\,-\varepsilon^{2}\sum_{i=1}^{N_{x}}\sum_{j=1}^{N_{y}}(\Delta\phi)_{i,j}^{n+1}(\phi_{i,j}^{n+1}-\phi_{i,j}^{n})-\frac{1}{\kappa\Delta t\Delta y}\sum_{i=1}^{N_{x}}\left(\phi_{i,1}^{n+1}-\phi_{i,1}^{n}\right)^{2}\nonumber\\
   &\,+\xi^{n+1}\sum_{i=1}^{N_{x}}\sum_{j=1}^{N_{y}}F'(\phi_{i,j}^{n+1})(\phi_{i,j}^{n+1}-\phi_{i,j}^{n})\nonumber\\
\le&\,-\varepsilon^{2}\sum_{i=1}^{N_{x}}\sum_{j=1}^{N_{y}}(\Delta\phi)_{i,j}^{n+1}(\phi_{i,j}^{n+1}-\phi_{i,j}^{n})
    +{\xi}^{n+1}\sum_{i=1}^{N_{x}}\sum_{j=1}^{N_{y}}F'(\phi_{i,j}^{n+1})(\phi_{i,j}^{n+1}-\phi_{i,j}^{n})\nonumber\\
=&\,\sum_{i=1}^{N_{x}}\sum_{j=1}^{N_{y}}(\phi_{i,j}^{n+1}-\phi_{i,j}^{n})\mu_{i,j}^{n+1}\nonumber\\
\le&\, -\Delta t\sum_{i=1}^{N_{x}-1}\sum_{j=1}^{N_{y}} \min\left\{M(\phi_{i,j}^{n+1},\phi_{i+1,j}^{n+1}),M(\phi_{i+1,j}^{n+1},\phi_{i,j}^{n+1})\right\}\left|V_{i+\frac{1}{2},j}^{n+1}\right|^{2}\nonumber\\
&-\Delta t\sum_{i=1}^{N_{x}}\sum_{j=1}^{N_{y}-1}\min\left\{M(\phi_{i,j}^{n+1},\phi_{i,j+1}^{n+1}),M(\phi_{i,j+1}^{n+1},\phi_{i,j}^{n+1})\right\}
	\left|V_{i,j+\frac{1}{2}}^{n+1}\right|^{2}\le 0.\label{thc6}
\end{align}	
\end{proof}

\section{Dimensional-splitting technique}
\label{sec4}

The fully coupled scheme provides the desired structural guarantees, but its nonlinear update involves the entire spatial grid. To reduce the size of each solve while retaining those guarantees, we update one row or column at a time. Each update uses the full discrete energy, including transverse gradient contributions, while changing only the cells in the active row or column. The resulting sequential splitting preserves the bounds, mass, and energy dissipation established in Section~\ref{sec3}.

\begin{figure}[!htp]
\centering
\includegraphics[width=8cm]{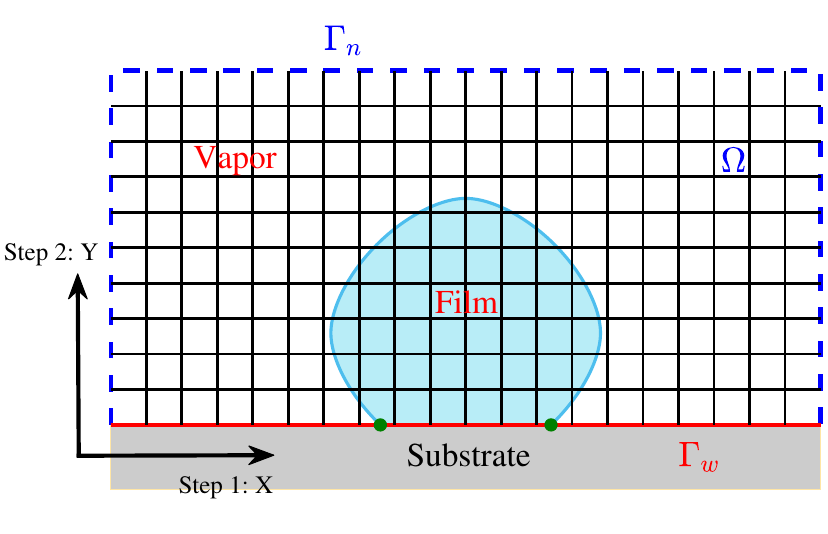}
\caption{
Sequential row and column updates in the dimensional-splitting scheme.}\label{figdsm}
\end{figure}

Specifically, at each time step $n$, we first sweep along the $x$-direction:
for each fixed row index $q=1,2,\cdots,N_{y}$, the cells on that row are updated by solving a one-dimensional Cahn--Hilliard system.
Let $\widetilde{\phi}_{i,j}^{n,q}$ denote the intermediate solution after the $q$-th row has been processed,
with the initialization $\widetilde{\phi}_{i,j}^{n,0}=\phi_{i,j}^{n}$.
After all rows are swept, we then sweep along the $y$-direction:
for each fixed column index $p=1,2,\cdots,N_{x}$, the cells on that column are updated analogously,
yielding $\widehat{\phi}_{i,j}^{n,p}$ with $\widehat{\phi}_{i,j}^{n,0}=\widetilde{\phi}_{i,j}^{n,N_{y}}$.
This alternating-direction procedure is illustrated in Fig.~\ref{figdsm}, and the full scheme is formulated as follows.

$\mathbf{Step~1.~for~q=1,2,\cdots,N_{y}~do:}$
\begin{align}
&\widetilde{\phi}_{i,j}^{n,q}-\widetilde{\phi}_{i,j}^{n,q-1}=
\left\{\begin{array}{ll}
-\frac{\Delta t}{\Delta x}\left(\widetilde{J}_{i+\frac{1}{2},j}^{n,q}-\widetilde{J}_{i-\frac{1}{2},j}^{n,q}\right),& \text{if}~j=q;\\[1mm]
0,& \text{otherwise},
\end{array}\right.\label{eqs1}\\[1mm]
&\widetilde{J}_{i+\frac{1}{2},j}^{n,q}=\left(\widetilde{V}_{i+\frac{1}{2},j}^{n,q}\right)^{+}M(\widetilde{\phi}_{i,j}^{n,q},\widetilde{\phi}_{i+1,j}^{n,q})
+\left(\widetilde{V}_{i+\frac{1}{2},j}^{n,q}\right)^{-}M(\widetilde{\phi}_{i+1,j}^{n,q},\widetilde{\phi}_{i,j}^{n,q}),\label{eqs2}\\[1mm]
&\widetilde{V}_{i+\frac{1}{2},j}^{n,q}=-\frac{1}{\Delta x}\left(\widetilde{\mu}_{i+1,j}^{n,q}-\widetilde{\mu}_{i,j}^{n,q}\right),\label{eqs3}\\[2mm]
&\widetilde{\mu}_{i,j}^{n,q}=-\varepsilon^{2}(\Delta\widetilde{\phi})_{i,j}^{n,q}+\widetilde{\xi}^{n,q}F'(\widetilde{\phi}_{i,j}^{n,q}),\label{eqs4}\\[1mm]
&\sum_{i=1}^{N_{x}}\sum_{j=1}^{N_{y}}\left(F(\widetilde{\phi}_{i,j}^{n,q})-F(\widetilde{\phi}_{i,j}^{n,q-1})\right)=\widetilde{\xi}^{n,q}\sum_{i=1}^{N_{x}}\sum_{j=1}^{N_{y}}F'(\widetilde{\phi}_{i,j}^{n,q})(\widetilde{\phi}_{i,j}^{n,q}-\widetilde{\phi}_{i,j}^{n,q-1}),\label{eqs5}\\[1mm]
&\sum_{i=1}^{N_{x}}\left(g(\widetilde{\phi}_{i,1}^{n,q})-g(\widetilde{\phi}_{i,1}^{n,q-1})\right)=\widetilde{\eta}^{n,q}\sum_{i=1}^{N_{x}}g'(\widetilde{\phi}_{i,1}^{n,q})\left(\widetilde{\phi}_{i,1}^{n,q}-\widetilde{\phi}_{i,1}^{n,q-1}\right),\label{eqs6}
\end{align}
subject to the following dynamic contact line boundary conditions
\begin{equation}
\frac{\widetilde{\phi}_{i,1}^{n,q}-\widetilde{\phi}_{i,1}^{n,q-1}}{\kappa\Delta t}=
\begin{cases}
\varepsilon^{2} (\partial_{y}\widetilde{\phi})_{i,\frac{1}{2}}^{n,q}-\widetilde{\eta}^{n,q}g'(\widetilde{\phi}_{i,1}^{n,q}), \quad & \text{if}~q=1;\\[1mm]
0,& \text{otherwise},\quad
\end{cases}
\quad \text{on}~\Gamma_{w},\label{eqs7}
\end{equation}
and with no-flux boundary conditions implemented by
\begin{equation}
\widetilde{J}_{\frac{1}{2},j}^{n,q} = 0,\quad \widetilde{J}_{N_{x}+\frac{1}{2},j}^{n,q} = 0,\quad j=1,2,\cdots,N_{y},\label{eqs8}
\end{equation}
where
\begin{equation}
(\partial_{x}\widetilde{\phi})_{i+\frac{1}{2},j}^{n,q}=\frac{\widetilde{\phi}_{i+1,j}^{n,q}-\widetilde{\phi}_{i,j}^{n,q}}{\Delta x}\quad \text{and} \quad (\partial_{y}\widetilde{\phi})_{i,j+\frac{1}{2}}^{n,q}=\frac{\widetilde{\phi}_{i,j+1}^{n,q}-\widetilde{\phi}_{i,j}^{n,q}}{\Delta y},\label{eqs9}
\end{equation}
and the Laplacian term $(\Delta\widetilde{\phi})_{i,j}^{n,q}$ is discretized by the following central difference formula
\begin{equation}
(\Delta\widetilde{\phi})_{i,j}^{n,q}=(\partial_{x}^{2}\widetilde{\phi})_{i,j}^{n,q}+(\partial_{y}^{2}\widetilde{\phi})_{i,j}^{n,q}=\frac{\widetilde{\phi}_{i+1,j}^{n,q}-2\widetilde{\phi}_{i,j}^{n,q}+\widetilde{\phi}_{i-1,j}^{n,q}}{\Delta x^{2}}
+\frac{\widetilde{\phi}_{i,j+1}^{n,q}-2\widetilde{\phi}_{i,j}^{n,q}+\widetilde{\phi}_{i,j-1}^{n,q}}{\Delta y^{2}}.\label{eqs10}
\end{equation}

The boundary conditions are imposed through the following ghost values:
\begin{equation}
\widetilde{\phi}_{0,j}^{n,q}=\widetilde{\phi}_{1,j}^{n,q},~
\widetilde{\phi}_{N_{x}+1,j}^{n,q}=\widetilde{\phi}_{N_{x},j}^{n,q},~
\widetilde{\phi}_{i,0}^{n,q}=\widetilde{\phi}_{i,1}^{n,q}-\frac{\Delta y}{\varepsilon^{2}}
\left(\frac{\widetilde{\phi}_{i,1}^{n,q}-\widetilde{\phi}_{i,1}^{n,q-1}}{\kappa\Delta t}
+\widetilde{\eta}^{n,q}g'(\widetilde{\phi}_{i,1}^{n,q})\right),~
\widetilde{\phi}_{i,N_{y}+1}^{n,q}=\widetilde{\phi}_{i,N_{y}}^{n,q}.
\label{eqghostq}
\end{equation}

Once the $x$-directional sweep is completed, the $y$-directional sweep is performed analogously,
with the initialization $\widehat{\phi}_{i,j}^{n,p}|_{p=0}=\widetilde{\phi}_{i,j}^{n,q}|_{q=N_{y}}$.
The scheme for each $y$-directional sweep reads:

$\mathbf{Step~2.~for~p=1,2,\cdots,N_{x}~do:}$
\begin{align}
&\widehat{\phi}_{i,j}^{n,p}-\widehat{\phi}_{i,j}^{n,p-1}=
\left\{\begin{array}{ll}
-\frac{\Delta t}{\Delta y}\left(\widehat{J}_{i,j+\frac{1}{2}}^{n,p}-\widehat{J}_{i,j-\frac{1}{2}}^{n,p}\right),& \text{if}~i=p;\\[1mm]
0,& \text{otherwise},
\end{array}\right.\label{eqs12}\\[1mm]
&\widehat{J}_{i,j+\frac{1}{2}}^{n,p}=\left(\widehat{V}_{i,j+\frac{1}{2}}^{n,p}\right)^{+}M(\widehat{\phi}_{i,j}^{n,p},\widehat{\phi}_{i,j+1}^{n,p})
+\left(\widehat{V}_{i,j+\frac{1}{2}}^{n,p}\right)^{-}M(\widehat{\phi}_{i,j+1}^{n,p},\widehat{\phi}_{i,j}^{n,p}),\label{eqs13}\\[1mm]
&\widehat{V}_{i,j+\frac{1}{2}}^{n,p}=-\frac{1}{\Delta y}\left(\widehat{\mu}_{i,j+1}^{n,p}-\widehat{\mu}_{i,j}^{n,p}\right),\label{eqs14}\\[2mm]
&\widehat{\mu}_{i,j}^{n,p}=-\varepsilon^{2}(\Delta\widehat{\phi})_{i,j}^{n,p}+\widehat{\xi}^{n,p}F'(\widehat{\phi}_{i,j}^{n,p}),\label{eqs15}\\[1mm]
&\sum_{i=1}^{N_{x}}\sum_{j=1}^{N_{y}}\left(F(\widehat{\phi}_{i,j}^{n,p})-F(\widehat{\phi}_{i,j}^{n,p-1})\right)=
\widehat{\xi}^{n,p}\sum_{i=1}^{N_{x}}\sum_{j=1}^{N_{y}}F'(\widehat{\phi}_{i,j}^{n,p})(\widehat{\phi}_{i,j}^{n,p}-\widehat{\phi}_{i,j}^{n,p-1}),\label{eqs16}\\[1mm]
&\sum_{i=1}^{N_{x}}\left(g(\widehat{\phi}_{i,1}^{n,p})-g(\widehat{\phi}_{i,1}^{n,p-1})\right)=\widehat{\eta}^{n,p}\sum_{i=1}^{N_{x}}g'(\widehat{\phi}_{i,1}^{n,p})\left(\widehat{\phi}_{i,1}^{n,p}-\widehat{\phi}_{i,1}^{n,p-1}\right),\label{eqs17}
\end{align}
subject to the following dynamic contact line boundary conditions
\begin{equation}
\frac{\widehat{\phi}_{i,1}^{n,p}-\widehat{\phi}_{i,1}^{n,p-1}}{\kappa\Delta t}=
\begin{cases}
\varepsilon^2 (\partial_{y}\widehat{\phi})_{i,\frac{1}{2}}^{n,p}-\widehat{\eta}^{n,p}g'(\widehat{\phi}_{i,1}^{n,p}), \quad
& \text{if~}i=p;\\[1mm]
0,& \text{otherwise},\quad
\end{cases}
\quad\text{on}~\Gamma_{w},\label{eqs18}
\end{equation}
and with no-flux boundary conditions implemented by
\begin{equation}
\widehat{J}_{i,\frac{1}{2}}^{n,p} = 0,\quad \widehat{J}_{i,N_{y}+\frac{1}{2}}^{n,p} = 0,\quad i=1,2,\cdots,N_{x},\label{eqs19}
\end{equation}
where
\begin{equation}
(\partial_{x}\widehat{\phi})_{i+\frac{1}{2},j}^{n,p}=\frac{\widehat{\phi}_{i+1,j}^{n,p}-\widehat{\phi}_{i,j}^{n,p}}{\Delta x}\quad \text{and} \quad (\partial_{y}\widehat{\phi})_{i,j+\frac{1}{2}}^{n,p}=\frac{\widehat{\phi}_{i,j+1}^{n,p}-\widehat{\phi}_{i,j}^{n,p}}{\Delta y},\label{eqs20}
\end{equation}
and the Laplacian term $(\Delta\widehat{\phi})_{i,j}^{n,p}$ is discretized as follows:
\begin{equation}
(\Delta\widehat{\phi})_{i,j}^{n,p}=(\partial_{x}^{2}\widehat{\phi})_{i,j}^{n,p}+(\partial_{y}^{2}\widehat{\phi})_{i,j}^{n,p}=\frac{\widehat{\phi}_{i+1,j}^{n,p}-2\widehat{\phi}_{i,j}^{n,p}+\widehat{\phi}_{i-1,j}^{n,p}}{\Delta x^{2}}
+\frac{\widehat{\phi}_{i,j+1}^{n,p}-2\widehat{\phi}_{i,j}^{n,p}+\widehat{\phi}_{i,j-1}^{n,p}}{\Delta y^{2}}.\label{eqs21}
\end{equation}
The ghost point values for the $y$-directional sweep are assigned analogously:
\begin{equation}
\widehat{\phi}_{0,j}^{n,p}=\widehat{\phi}_{1,j}^{n,p},~
\widehat{\phi}_{N_{x}+1,j}^{n,p}=\widehat{\phi}_{N_{x},j}^{n,p},~
\widehat{\phi}_{i,0}^{n,p}=\widehat{\phi}_{i,1}^{n,p}-\frac{\Delta y}{\varepsilon^{2}}
\left(\frac{\widehat{\phi}_{i,1}^{n,p}-\widehat{\phi}_{i,1}^{n,p-1}}{\kappa\Delta t}
+\widehat{\eta}^{n,p}g'(\widehat{\phi}_{i,1}^{n,p})\right),~
\widehat{\phi}_{i,N_{y}+1}^{n,p}=\widehat{\phi}_{i,N_{y}}^{n,p}.\label{eqghostp}
\end{equation}

Finally, when the above inner loops are completed,
the cell average and the Lagrange multipliers at the $(n+1)$-th step can be obtained as follows
\begin{equation}
\phi_{i,j}^{n+1}=\widehat{\phi}_{i,j}^{n,p}|_{p=_{N_{x}}},\quad
\xi^{n+1}=\frac{\sum_{q=1}^{N_{y}}\widetilde{\xi}^{n,q}+\sum_{p=1}^{N_{x}}\widehat{\xi}^{n,p}}{N_{y}+N_{x}}\quad\text{and}\quad
\eta^{n+1}=\frac{\sum_{q=1}^{N_{y}}\widetilde{\eta}^{n,q}+\sum_{p=1}^{N_{x}}\widehat{\eta}^{n,p}}{N_{y}+N_{x}}.\label{eqs23}
\end{equation}

The displayed full-step multipliers are arithmetic averages of the row and column values. The substep multipliers, rather than these averages, enter the update equations and the energy estimates.

The dimensional-splitting scheme constructed above retains the three structure-preserving
properties of the original upwind Lagrange multiplier scheme, namely boundedness, mass conservation, and energy
dissipation, as established by the following theorems.

\begin{thm}\label{thm4} (Boundedness)
The dimensional-splitting scheme \eqref{eqs1}-\eqref{eqs23} ensures the boundedness of the phase variable.
That is, for all $i,j$, if $|\phi_{i,j}^{n}|<1$, then $|\phi_{i,j}^{n+1}|<1$.
\end{thm}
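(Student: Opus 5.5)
The plan is to reduce Theorem~\ref{thm4} to a one-dimensional version of the contradiction argument used for Theorem~\ref{thm1}, applied substep by substep, and then conclude by induction over the $N_y+N_x$ sweeps. Since $\phi^{n+1}_{i,j}=\widehat{\phi}^{n,N_x}_{i,j}$ is obtained from $\phi^{n}_{i,j}=\widetilde{\phi}^{n,0}_{i,j}$ through the chain $\widetilde{\phi}^{n,0}\to\widetilde{\phi}^{n,1}\to\cdots\to\widetilde{\phi}^{n,N_y}=\widehat{\phi}^{n,0}\to\cdots\to\widehat{\phi}^{n,N_x}$, it suffices to prove the following. If $|\widetilde{\phi}^{n,q-1}_{i,j}|<1$ for all $i,j$, then $|\widetilde{\phi}^{n,q}_{i,j}|<1$ for all $i,j$, and the same holds for the hatted column sweeps.

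For a row substep $q$, cells with $j\neq q$ are unchanged by \eqref{eqs1}, so they inherit the strict bound. Note also that the wall condition \eqref{eqs7} on $j=1$ is either trivial (for $q\ne1$) or is enforced on the active row itself. It affects $\widetilde{\phi}$ only through the ghost value in $\widetilde{\mu}$, not through the flux structure. On the active row $j=q$, suppose to the contrary that some contiguous block $\{\widetilde{\phi}^{n,q}_{i,q}:\alpha\le i\le k\}$ satisfies $\widetilde{\phi}^{n,q}_{i,q}\ge 1$, chosen maximal so that its neighbors $i=\alpha-1$ and $i=k+1$ (if interior) have values $<1$. Summing \eqref{eqs1} over the block telescopes the flux differences. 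The left side is then strictly positive, because $\widetilde{\phi}^{n,q}_{i,q}\ge1>\widetilde{\phi}^{n,q-1}_{i,q}$. The right side equals $\frac{\Delta t}{\Delta x}\bigl(\widetilde{J}^{n,q}_{\alpha-\frac12,q}-\widetilde{J}^{n,q}_{k+\frac12,q}\bigr)$.

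Next I check the signs of these endpoint fluxes via \eqref{eqs2} and \eqref{eqn5}, exactly as in \eqref{tha2}. We have $M(\widetilde{\phi}_{\alpha-1,q},\widetilde{\phi}_{\alpha,q})=0$ because $(1-\widetilde{\phi}_{\alpha,q})^+=0$, and $M(\widetilde{\phi}_{k+1,q},\widetilde{\phi}_{k,q})=0$ for the same reason. Hence $\widetilde{J}_{\alpha-\frac12,q}=(\widetilde V)^{-}M(\cdot)\le0$ and $\widetilde{J}_{k+\frac12,q}=(\widetilde V)^{+}M(\cdot)\ge0$, since the remaining mobilities are nonnegative products of positive parts. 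At the domain ends, \eqref{eqs8} sets these fluxes to zero. The right side is therefore $\le0$, which is a contradiction. The lower bound $>-1$ follows symmetrically using the factor $(1+\chi_1)^+$. The column sweep in Step~2 is identical with the roles of $x$ and $y$ exchanged, using \eqref{eqs12}, \eqref{eqs13} and \eqref{eqs19}.

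The main point requiring care is that the multipliers $\widetilde{\xi}^{n,q},\widetilde{\eta}^{n,q}$, the transverse Laplacian terms in \eqref{eqs4}, and the wall ghost value \eqref{eqghostq} all enter only through $\widetilde{\mu}$, hence only through the sign of $\widetilde V$. Because the argument never uses that sign, these couplings cause no difficulty. The only other subtlety is the strictness of the left-hand inequality. This is where the inductive hypothesis $|\widetilde{\phi}^{n,q-1}|<1$ on the previous substep, rather than on $\phi^n$, is needed, and this is why the induction must be organized over substeps.
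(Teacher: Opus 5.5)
Your proposal is correct and follows essentially the same route as the paper. Both argue by contradiction on a contiguous block of active-row (or active-column) cells with value $\ge 1$, telescope the flux differences, use the vanishing factor $(1-\chi_2)^{+}$ of the upwind mobility at the block endpoints to get a nonpositive right-hand side, and then repeat over all $N_y$ row sweeps and $N_x$ column sweeps. Your version only makes explicit what the paper leaves implicit: the substep-wise induction hypothesis needed for strict positivity of the left-hand side, the unchanged inactive cells, and the zero boundary fluxes from \eqref{eqs8} when the block reaches the end of the domain.
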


\begin{proof}
We first prove that $|\phi_{i,j}^{n}|<1$ implies $|\widetilde{\phi}_{i,j}^{n,1}|<1$.
Suppose, to the contrary, that there exists a group of contiguous cells
$\{\widetilde{\phi}_{\alpha,j}^{n,1},\widetilde{\phi}_{\alpha+1,j}^{n,1},\cdots,\widetilde{\phi}_{k,j}^{n,1}\}$ with $\widetilde{\phi}_{i,j}^{n,1}\ge 1$.
Substituting $q=1$ into \eqref{eqs1} and summing over these cells yields
\begin{align}
0<&\sum_{i=\alpha}^{k}\frac{\Delta x}{\Delta t}\left(\widetilde{\phi}_{i,j}^{n,1}-\widetilde{\phi}_{i,j}^{n,q}|_{q=0}\right)=
-\sum_{i=\alpha}^{k}\left(\widetilde{J}_{i+\frac{1}{2},j}^{n,1}-\widetilde{J}_{i-\frac{1}{2},j}^{n,1}\right)=\widetilde{J}_{\alpha-\frac{1}{2},j}^{n,1}-\widetilde{J}_{k+\frac{1}{2},j}^{n,1}\nonumber\\
=&\,\left(\widetilde{V}_{\alpha-\frac{1}{2},j}^{n,1}\right)^{+}M(\widetilde{\phi}_{\alpha-1,j}^{n,1},\widetilde{\phi}_{\alpha,j}^{n,1})
+\left(\widetilde{V}_{\alpha-\frac{1}{2},j}^{n,1}\right)^{-}M(\widetilde{\phi}_{\alpha,j}^{n,1},\widetilde{\phi}_{\alpha-1,j}^{n,1})\nonumber\\
&-\left(\widetilde{V}_{k+\frac{1}{2},j}^{n,1}\right)^{+}M(\widetilde{\phi}_{k,j}^{n,1},\widetilde{\phi}_{k+1,j}^{n,1})
-\left(\widetilde{V}_{k+\frac{1}{2},j}^{n,1}\right)^{-}M(\widetilde{\phi}_{k+1,j}^{n,1},\widetilde{\phi}_{k,j}^{n,1}).\label{thb4}
\end{align}
By the definition~\eqref{eqn5} and the fact that $\widetilde{\phi}_{\alpha,j}^{n,1}\geq 1$ and $\widetilde{\phi}_{k,j}^{n,1}\geq 1$, we have
\begin{equation}
M(\widetilde{\phi}_{\alpha-1,j}^{n,1},\widetilde{\phi}_{\alpha,j}^{n,1})=0,\quad M(\widetilde{\phi}_{\alpha,j}^{n,1},\widetilde{\phi}_{\alpha-1,j}^{n,1})\geq0,\quad M(\widetilde{\phi}_{k,j}^{n,1},\widetilde{\phi}_{k+1,j}^{n,1})\geq0,\quad M(\widetilde{\phi}_{k+1,j}^{n,1},\widetilde{\phi}_{k,j}^{n,1})=0 .
\end{equation}
Therefore, the right-hand side of \eqref{thb4} must be non-positive, which contradicts the strict positivity of the left-hand side. Hence $\widetilde{\phi}_{i,j}^{n,1}<1$.
The same argument with reversed inequalities shows $\widetilde{\phi}_{i,j}^{n,1}>-1$. Repeating this argument for $q=2,3,\cdots,N_{y}$ yields $|\widetilde{\phi}_{i,j}^{n,q}|<1$ for all $q$.
An identical argument applied to the $y$-directional sweep gives $|\widehat{\phi}_{i,j}^{n,p}|<1$ for all $p$, since $\left|\widehat{\phi}_{i,j}^{n,p}|_{p=0}\right|=\left|\widetilde{\phi}_{i,j}^{n,q}|_{q=_{N_{y}}}\right|<1$,
and thus $|\phi_{i,j}^{n+1}|=\left|\widehat{\phi}_{i,j}^{n,p}|_{p=_{N_{x}}}\right|<1$.
\end{proof}

\begin{thm}\label{thm5} (Mass conservation)
The dimensional-splitting scheme \eqref{eqs1}-\eqref{eqs23} ensures that the total mass is conserved during the evolution, i.e.
\begin{equation}
m^{n+1}\triangleq\sum_{i=1}^{N_{x}}\sum_{j=1}^{N_{y}}\phi_{i,j}^{n+1}=\sum_{i=1}^{N_{x}}\sum_{j=1}^{N_{y}}\phi_{i,j}^{n}
=\cdots=\sum_{i=1}^{N_{x}}\sum_{j=1}^{N_{y}}\phi_{i,j}^{0}.\label{thd1}
\end{equation}
\end{thm}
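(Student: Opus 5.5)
The plan is to prove mass conservation sweep by sweep and then chain the substeps, mirroring the argument for Theorem~\ref{thm2}. First, consider the $x$-directional substep with row index $q$.

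By \eqref{eqs1}, only cells in row $j=q$ change. Summing \eqref{eqs1} over all $i,j$ therefore reduces to
\begin{equation*}
\sum_{i=1}^{N_{x}}\sum_{j=1}^{N_{y}}\left(\widetilde{\phi}_{i,j}^{n,q}-\widetilde{\phi}_{i,j}^{n,q-1}\right)
=-\frac{\Delta t}{\Delta x}\sum_{i=1}^{N_{x}}\left(\widetilde{J}_{i+\frac{1}{2},q}^{n,q}-\widetilde{J}_{i-\frac{1}{2},q}^{n,q}\right).
\end{equation*}
This sum telescopes to $-\frac{\Delta t}{\Delta x}\big(\widetilde{J}_{N_{x}+\frac{1}{2},q}^{n,q}-\widetilde{J}_{\frac{1}{2},q}^{n,q}\big)$, which vanishes by the no-flux conditions \eqref{eqs8}. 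Hence the total mass after substep $q$ equals the total mass after substep $q-1$.

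The $y$-directional substep with column index $p$ is handled the same way. Summing \eqref{eqs12} over all cells leaves only column $i=p$. The column sum telescopes to the boundary fluxes $\widehat{J}_{p,N_{y}+\frac{1}{2}}^{n,p}$ and $\widehat{J}_{p,\frac{1}{2}}^{n,p}$, and both vanish by \eqref{eqs19}.

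The one point needing care is the substrate cells $(i,1)$. There, \eqref{eqs7} and \eqref{eqs18} also prescribe $\widetilde{\phi}_{i,1}^{n,q}-\widetilde{\phi}_{i,1}^{n,q-1}$, which might look like an additional source of mass. In this scheme, however, the dynamic contact line relation only determines the ghost values \eqref{eqghostq} and \eqref{eqghostp}, which enter through the Laplacian in $\widetilde{\mu}$ and $\widehat{\mu}$. The cell update itself is always the conservative flux difference \eqref{eqs1} or \eqref{eqs12}, with zero boundary flux at $\Gamma_w$. So the wall condition affects only the chemical potential and not the mass balance. This is the step I would state explicitly, exactly as in Theorem~\ref{thm2}.

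Finally, chain the identities. Using $\widetilde{\phi}^{n,0}=\phi^{n}$, $\widehat{\phi}^{n,0}=\widetilde{\phi}^{n,N_{y}}$ and $\phi^{n+1}=\widehat{\phi}^{n,N_{x}}$ from \eqref{eqs23}, we obtain $m^{n+1}=m^{n}$. Induction on $n$ then gives \eqref{thd1}.
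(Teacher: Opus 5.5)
Your proposal is correct and follows the paper's proof essentially verbatim. You sum each row substep and each column substep over all cells, telescope the flux differences to boundary fluxes that vanish by \eqref{eqs8} and \eqref{eqs19}, and chain the substeps through $\widetilde{\phi}^{n,0}=\phi^{n}$, $\widehat{\phi}^{n,0}=\widetilde{\phi}^{n,N_{y}}$ and \eqref{eqs23}. You also remark that the wall relations \eqref{eqs7} and \eqref{eqs18} enter only through the ghost values \eqref{eqghostq} and \eqref{eqghostp}, so they do not affect the mass balance. That remark is correct and makes explicit a point the paper leaves implicit.
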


\begin{proof}
Summing \eqref{eqs1} and \eqref{eqs12} over all cells $C_{i,j}$ gives
\begin{align}
\sum_{i=1}^{N_{x}}\sum_{j=1}^{N_{y}}(\widetilde{\phi}_{i,j}^{n,q}-\widetilde{\phi}_{i,j}^{n,q-1})
=-\frac{\Delta t}{\Delta x}\sum_{i=1}^{N_{x}}\left(\widetilde{J}_{i+\frac{1}{2},q}^{n,q}-\widetilde{J}_{i-\frac{1}{2},q}^{n,q}\right)
=-\frac{\Delta t}{\Delta x}\left(\widetilde{J}_{N_{x}+\frac{1}{2},q}^{n,q}-\widetilde{J}_{{\frac{1}{2},q}}^{n,q}\right)=0,
\quad \forall q,\label{thd2}\\
\sum_{i=1}^{N_{x}}\sum_{j=1}^{N_{y}}(\widehat{\phi}_{i,j}^{n,p}-\widehat{\phi}_{i,j}^{n,p-1})
=-\frac{\Delta t}{\Delta y}\sum_{j=1}^{N_{y}}\left(\widehat{J}_{p,j+\frac{1}{2}}^{n,p}-\widehat{J}_{p,j-\frac{1}{2}}^{n,p}\right)
=-\frac{\Delta t}{\Delta y}\left(\widehat{J}_{p,N_{y}+\frac{1}{2}}^{n,p}-\widehat{J}_{{p,\frac{1}{2}}}^{n,p}\right)=0,
\quad \forall p.\label{thd3}
\end{align}
Therefore,
\begin{align}
\sum_{i=1}^{N_{x}}\sum_{j=1}^{N_{y}}\phi_{i,j}^{n+1}
=&\sum_{i=1}^{N_{x}}\sum_{j=1}^{N_{y}}\widehat{\phi}_{i,j}^{n,p}|_{p=_{N_{x}}}
=\sum_{i=1}^{N_{x}}\sum_{j=1}^{N_{y}}\widehat{\phi}_{i,j}^{n,p}|_{p=_{N_{x}}-1}
=\cdots=\sum_{i=1}^{N_{x}}\sum_{j=1}^{N_{y}}\widehat{\phi}_{i,j}^{n,p}|_{p=0}\nonumber\\
=&\sum_{i=1}^{N_{x}}\sum_{j=1}^{N_{y}}\widetilde{\phi}_{i,j}^{n,q}|_{q=_{N_{y}}}
=\sum_{i=1}^{N_{x}}\sum_{j=1}^{N_{y}}\widetilde{\phi}_{i,j}^{n,q}|_{q=_{N_{y}}-1}
=\cdots=\sum_{i=1}^{N_{x}}\sum_{j=1}^{N_{y}}\widetilde{\phi}_{i,j}^{n,q}|_{q=0}\nonumber\\
=&\sum_{i=1}^{N_{x}}\sum_{j=1}^{N_{y}}\phi_{i,j}^{n}
=\cdots=\sum_{i=1}^{N_{x}}\sum_{j=1}^{N_{y}}\phi_{i,j}^{0},\label{thd4}
\end{align}
where the initial conditions $\widehat{\phi}_{i,j}^{n,p}|_{p=0}=\widetilde{\phi}_{i,j}^{n,q}|_{q=_{N_{y}}}$ and $\widetilde{\phi}_{i,j}^{n,q}|_{q=0}=\phi_{i,j}^{n}$ are applied.
\end{proof}

\begin{thm}\label{thm6} (Energy dissipation)
The dimensional-splitting scheme \eqref{eqs1}-\eqref{eqs23} is energy stable,
and satisfies the following discrete energy dissipation law:
\begin{align}
\frac{\mathcal{W}^{n+1}-\mathcal{W}^{n}}{\Delta t}
\le&\, -\Delta x\Delta y\sum_{p=1}^{N_{x}}\sum_{j=1}^{N_{y}-1}
\min\left\{M(\widehat{\phi}_{p,j}^{n,p},\widehat{\phi}_{p,j+1}^{n,p}),M(\widehat{\phi}_{p,j+1}^{n,p},\widehat{\phi}_{p,j}^{n,p})\right\}
\left|\widehat{V}_{p,j+\frac{1}{2}}^{n,p}\right|^{2}\nonumber\\
&\,-\Delta x\Delta y\sum_{i=1}^{N_{x}-1}\sum_{q=1}^{N_{y}}
\min\left\{M(\widetilde{\phi}_{i,q}^{n,q},\widetilde{\phi}_{i+1,q}^{n,q}),M(\widetilde{\phi}_{i+1,q}^{n,q},\widetilde{\phi}_{i,q}^{n,q})\right\}
\left|\widetilde{V}_{i+\frac{1}{2},q}^{n,q}\right|^{2}
\le\, 0,\label{thf1}
\end{align}
where
\begin{align}
\mathcal{W}^{n}
=&\,\Delta x\Delta y\sum_{i=1}^{N_{x}-1}\sum_{j=1}^{N_{y}}\frac{\varepsilon^{2}}{2}\left(\frac{\phi_{i+1,j}^{n}-\phi_{i,j}^{n}}{\Delta x}\right)^{2}
+\Delta x\Delta y\sum_{i=1}^{N_{x}}\sum_{j=1}^{N_{y}-1}\frac{\varepsilon^{2}}{2}\left(\frac{\phi_{i,j+1}^{n}-\phi_{i,j}^{n}}{\Delta y}\right)^{2}
\nonumber\\
&+\Delta x\Delta y\sum_{i=1}^{N_{x}}\sum_{j=1}^{N_{y}}F(\phi_{i,j}^{n})+\Delta x\sum_{i=1}^{N_{x}}g(\phi_{i,1}^{n}).\label{thf2}
\end{align}
\end{thm}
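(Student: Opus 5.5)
The plan is to prove a one-substep energy inequality for each row sweep and each column sweep, and then telescope. Define $\mathcal{W}(\cdot)$ by \eqref{thf2} for any grid function, and write $\widetilde{\mathcal{W}}^{n,q}$ and $\widehat{\mathcal{W}}^{n,p}$ for its values at the intermediate states. Since $\widetilde{\phi}^{n,0}=\phi^n$, $\widehat{\phi}^{n,0}=\widetilde{\phi}^{n,N_y}$ and $\widehat{\phi}^{n,N_x}=\phi^{n+1}$, the sum
\[
\mathcal{W}^{n+1}-\mathcal{W}^{n}=\sum_{q=1}^{N_y}\bigl(\widetilde{\mathcal{W}}^{n,q}-\widetilde{\mathcal{W}}^{n,q-1}\bigr)+\sum_{p=1}^{N_x}\bigl(\widehat{\mathcal{W}}^{n,p}-\widehat{\mathcal{W}}^{n,p-1}\bigr)
\]
is exact. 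It therefore suffices to show that the $q$-th row sweep satisfies
\[
\widetilde{\mathcal{W}}^{n,q}-\widetilde{\mathcal{W}}^{n,q-1}\le-\Delta t\,\Delta x\Delta y\sum_{i=1}^{N_x-1}\min\{M(\cdot,\cdot),M(\cdot,\cdot)\}\,|\widetilde{V}_{i+\frac12,q}^{n,q}|^2 ,
\]
with the analogous bound for column $p$. Summing these bounds gives \eqref{thf1}.

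For a single substep, I will repeat the computation in the proof of Theorem~\ref{thm3}, treating the substep as a backward Euler step from $\widetilde{\phi}^{n,q-1}$ to $\widetilde{\phi}^{n,q}$.

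\begin{enumerate}
\item Expand both gradient parts of $\mathcal{W}$ using $a^2-b^2=2a(a-b)-(a-b)^2$. Although only row $q$ changes, the transverse $y$-differences involving row $q$ still change. Both directions are handled exactly as in \eqref{thc3}--\eqref{thc4}, with the full Laplacian \eqref{eqs10} and the ghost values \eqref{eqghostq}. The result is $-\varepsilon^2\sum(\Delta\widetilde{\phi})^{n,q}_{i,j}(\widetilde{\phi}^{n,q}_{i,j}-\widetilde{\phi}^{n,q-1}_{i,j})$, plus a wall term, minus nonnegative squares.
\item Use \eqref{eqs5} and \eqref{eqs6} to replace the $F$ and $g$ differences by their multiplier forms. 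This turns the bulk part into $\sum(\widetilde{\phi}^{n,q}-\widetilde{\phi}^{n,q-1})\widetilde{\mu}^{n,q}$.
\item Treat the wall contribution. Only $j=1$ appears there. If $q=1$, the ghost relation \eqref{eqghostq} combines the $-\varepsilon^2(\widetilde{\phi}_{i,1}-\widetilde{\phi}_{i,0})/\Delta y^2$ term with the $\eta g'$ term to give $-\frac{1}{\kappa\Delta t\Delta y}\sum(\widetilde{\phi}_{i,1}^{n,q}-\widetilde{\phi}_{i,1}^{n,q-1})^2\le0$, as in \eqref{thc6}. If $q\neq1$, then row $1$ is frozen by \eqref{eqs1} and \eqref{eqs7}, so every wall increment vanishes and these terms are zero.
\item Apply summation by parts. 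The increment vanishes off row $q$, so the sum $\sum_{i,j}(\widetilde{\phi}^{n,q}-\widetilde{\phi}^{n,q-1})\widetilde{\mu}^{n,q}$ reduces to row $q$. There, \eqref{eqs1} and the no-flux condition \eqref{eqs8} give $-\Delta t\sum_i\widetilde{J}\widetilde{V}$. The upwind structure \eqref{eqs2} then bounds this by $-\Delta t\sum_i\min\{M,M\}|\widetilde{V}|^2$, as in \eqref{thc5}.
\end{enumerate}

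The column sweeps are handled identically. For $y$-sweeps the no-flux condition \eqref{eqs19} applies, and the wall cell of column $p$ changes through \eqref{eqs18} with the same ghost identity \eqref{eqghostp}.

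The main obstacle is consistency at the wall. In the $x$-sweep with $q=1$, the first equation of the substep must be read together with \eqref{eqs7}. I will verify that the ghost formula makes the boundary term combine exactly into a dissipative square, and that no leftover term appears where the frozen-row convention and the wall relation \eqref{eqs18} in the $y$-sweep could conflict. I will also check that the Lagrange multiplier identities are only used through the sums over the changed cells. This holds because unchanged cells contribute zero to both sides of \eqref{eqs5}--\eqref{eqs6} and \eqref{eqs16}--\eqref{eqs17}.
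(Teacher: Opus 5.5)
Your proposal is correct and follows the paper's proof essentially step for step. Both arguments prove a per-substep dissipation bound for each row sweep and each column sweep by repeating the computation of Theorem~\ref{thm3}: the gradient expansion with the ghost values, the multiplier identities, the wall term collapsing to $-\frac{1}{\kappa\Delta t\Delta y}\sum_i(\widetilde{\phi}_{i,1}^{n,q}-\widetilde{\phi}_{i,1}^{n,q-1})^2$, and summation by parts with the upwind flux; both then telescope over the sweeps. Your explicit $q=1$ versus $q\neq1$ split at the wall is a harmless refinement of what the paper does implicitly: it applies the ghost relation uniformly, which yields a vanishing square whenever row $1$ is frozen.
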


\begin{proof}
We first establish energy dissipation for each $x$-directional sweep:
\begin{equation}
\frac{\widetilde{\mathcal{W}}^{n,q}-\widetilde{\mathcal{W}}^{n,q-1}}{\Delta t}
\le -\Delta x\Delta y\sum_{i=1}^{N_{x}-1}
\min\left\{M(\widetilde{\phi}_{i,q}^{n,q},\widetilde{\phi}_{i+1,q}^{n,q}),M(\widetilde{\phi}_{i+1,q}^{n,q},\widetilde{\phi}_{i,q}^{n,q})\right\}
\left|\widetilde{V}_{i+\frac{1}{2},q}^{n,q}\right|^{2}\le 0,\label{thf3}
\end{equation}
where
\begin{align}
\widetilde{\mathcal{W}}^{n,q}
=&\,\Delta x\Delta y\sum_{i=1}^{N_{x}-1}\sum_{j=1}^{N_{y}}\frac{\varepsilon^{2}}{2}
\left(\frac{\widetilde{\phi}_{i+1,j}^{n,q}-\widetilde{\phi}_{i,j}^{n,q}}{\Delta x}\right)^{2}
+\Delta x\Delta y\sum_{i=1}^{N_{x}}\sum_{j=1}^{N_{y}-1}\frac{\varepsilon^{2}}{2}
\left(\frac{\widetilde{\phi}_{i,j+1}^{n,q}-\widetilde{\phi}_{i,j}^{n,q}}{\Delta y}\right)^{2}\nonumber\\
&+\Delta x\Delta y\sum_{i=1}^{N_{x}}\sum_{j=1}^{N_{y}}F(\widetilde{\phi}_{i,j}^{n,q})+\Delta x\sum_{i=1}^{N_{x}}g(\widetilde{\phi}_{i,1}^{n,q}).\label{thf4}
\end{align}
Subtracting the first term on the right-hand side of \eqref{thf4} at consecutive inner-loop steps and following the same procedure as in the proof of Theorem~\ref{thm3}, we obtain
\begin{align}
&\,\frac{\varepsilon^{2}}{2}\sum_{i=1}^{N_{x}-1}\sum_{j=1}^{N_{y}}\left[\left(\frac{\widetilde{\phi}_{i+1,j}^{n,q}-\widetilde{\phi}_{i,j}^{n,q}}{\Delta x}\right)^{2}
-\left(\frac{\widetilde{\phi}_{i+1,j}^{n,q-1}-\widetilde{\phi}_{i,j}^{n,q-1}}{\Delta x}\right)^{2}\right]\nonumber\\
=&\,-\varepsilon^{2}\sum_{i=2}^{N_{x}-1}\sum_{j=1}^{N_{y}}
\left(\frac{\widetilde{\phi}_{i+1,j}^{n,q}-2\widetilde{\phi}_{i,j}^{n,q}+\widetilde{\phi}_{i-1,j}^{n,q}}{\Delta x^{2}}\right)\cdot(\widetilde{\phi}_{i,j}^{n,q}-\widetilde{\phi}_{i,j}^{n,q-1})\nonumber\\
&\,+\varepsilon^{2}\sum_{j=1}^{N_{y}}\left(\frac{\widetilde{\phi}_{N_{x},j}^{n,q}-\widetilde{\phi}_{N_{x}-1,j}^{n,q}}{\Delta x^{2}}\right)
\cdot(\widetilde{\phi}_{N_{x},j}^{n,q}-\widetilde{\phi}_{N_{x},j}^{n,q-1})
-\varepsilon^{2}\sum_{j=1}^{N_{y}}\left(\frac{\widetilde{\phi}_{2,j}^{n,q}-\widetilde{\phi}_{1,j}^{n,q}}{\Delta x^{2}}\right)
\cdot(\widetilde{\phi}_{1,j}^{n,q}-\widetilde{\phi}_{1,j}^{n,q-1})\nonumber\\
&\,-\frac{\varepsilon^{2}}{2}\sum_{i=1}^{N_{x}-1}\sum_{j=1}^{N_{y}}\left[\left(\frac{\widetilde{\phi}_{i+1,j}^{n,q}-\widetilde{\phi}_{i,j}^{n,q}}{\Delta x}\right)
-\left(\frac{\widetilde{\phi}_{i+1,j}^{n,q-1}-\widetilde{\phi}_{i,j}^{n,q-1}}{\Delta x}\right)\right]^{2}\nonumber\\
=&\,-\varepsilon^{2}\sum_{i=1}^{N_{x}}\sum_{j=1}^{N_{y}}(\partial_{x}^{2}\widetilde{\phi})_{i,j}^{n,q}\cdot(\widetilde{\phi}_{i,j}^{n,q}-\widetilde{\phi}_{i,j}^{n,q-1})-\frac{\varepsilon^{2}}{2}\sum_{i=1}^{N_{x}-1}\sum_{j=1}^{N_{y}}\left[\left(\frac{\widetilde{\phi}_{i+1,j}^{n,q}-\widetilde{\phi}_{i,j}^{n,q}}{\Delta x}\right)
-\left(\frac{\widetilde{\phi}_{i+1,j}^{n,q-1}-\widetilde{\phi}_{i,j}^{n,q-1}}{\Delta x}\right)\right]^{2}.
\label{thf5}
\end{align}
A similar computation for the $y$-derivative term gives
\begin{align}
&\,\frac{\varepsilon^{2}}{2}\sum_{i=1}^{N_{x}}\sum_{j=1}^{N_{y}-1}\left[\left(\frac{\widetilde{\phi}_{i,j+1}^{n,q}-\widetilde{\phi}_{i,j}^{n,q}}{\Delta y}\right)^{2}
-\left(\frac{\widetilde{\phi}_{i,j+1}^{n,q-1}-\widetilde{\phi}_{i,j}^{n,q-1}}{\Delta y}\right)^{2}\right]\nonumber\\
=&\,-\varepsilon^{2}\sum_{i=1}^{N_{x}}\sum_{j=2}^{N_{y}-1}
\left(\frac{\widetilde{\phi}_{i,j+1}^{n,q}-2\widetilde{\phi}_{i,j}^{n,q}+\widetilde{\phi}_{i,j-1}^{n,q}}{\Delta y^{2}}\right)\cdot(\widetilde{\phi}_{i,j}^{n,q}-\widetilde{\phi}_{i,j}^{n,q-1})\nonumber\\
&+\varepsilon^{2}\sum_{i=1}^{N_{x}}\left(\frac{\widetilde{\phi}_{i,N_{y}}^{n,q}-\widetilde{\phi}_{i,N_{y}-1}^{n,q}}{\Delta y^{2}}\right)
\cdot(\widetilde{\phi}_{i,N_{y}}^{n,q}-\widetilde{\phi}_{i,N_{y}}^{n,q-1})
-\varepsilon^{2}\sum_{i=1}^{N_{x}}\left(\frac{\widetilde{\phi}_{i,2}^{n,q}-\widetilde{\phi}_{i,1}^{n,q}}{\Delta y^{2}}\right)\cdot(\widetilde{\phi}_{i,1}^{n,q}-\widetilde{\phi}_{i,1}^{n,q-1})
\nonumber\\
&-\frac{\varepsilon^{2}}{2}\sum_{i=1}^{N_{x}}\sum_{j=1}^{N_{y}-1}\left[\left(\frac{\widetilde{\phi}_{i,j+1}^{n,q}-\widetilde{\phi}_{i,j}^{n,q}}{\Delta y}\right)
-\left(\frac{\widetilde{\phi}_{i,j+1}^{n,q-1}-\widetilde{\phi}_{i,j}^{n,q-1}}{\Delta y}\right)\right]^{2}\nonumber\\
=&\,-\varepsilon^{2}\sum_{i=1}^{N_{x}}\sum_{j=1}^{N_{y}}(\partial_{y}^{2}\widetilde{\phi})_{i,j}^{n,q}\cdot(\widetilde{\phi}_{i,j}^{n,q}-\widetilde{\phi}_{i,j}^{n,q-1})	-\varepsilon^{2}\sum_{i=1}^{N_{x}}\left(\frac{\widetilde{\phi}_{i,1}^{n,q}-\widetilde{\phi}_{i,0}^{n,q}}{\Delta y^{2}}\right)\cdot(\widetilde{\phi}_{i,1}^{n,q}-\widetilde{\phi}_{i,1}^{n,q-1})\nonumber\\
&-\frac{\varepsilon^{2}}{2}\sum_{i=1}^{N_{x}}\sum_{j=1}^{N_{y}-1}\left[\left(\frac{\widetilde{\phi}_{i,j+1}^{n,q}-\widetilde{\phi}_{i,j}^{n,q}}{\Delta y}\right)
-\left(\frac{\widetilde{\phi}_{i,j+1}^{n,q-1}-\widetilde{\phi}_{i,j}^{n,q-1}}{\Delta y}\right)\right]^{2}.
\label{thf6}
\end{align}

Multiplying \eqref{eqs1} by $\widetilde{\mu}_{i,j}^{n,q}$ and summing over the cells gives
\begin{align}
&\,\sum_{i=1}^{N_{x}}\sum_{j=1}^{N_{y}}(\widetilde{\phi}_{i,j}^{n,q}-\widetilde{\phi}_{i,j}^{n,q-1})\widetilde{\mu}_{i,j}^{n,q} =-\sum_{i=1}^{N_{x}}\frac{\Delta t}{\Delta x}\left(\widetilde{J}_{i+\frac{1}{2},q}^{n,q}-\widetilde{J}_{i-\frac{1}{2},q}^{n,q}\right)\widetilde{\mu}_{i,q}^{n,q}
=\sum_{i=1}^{N_{x}-1}\frac{\Delta t}{\Delta x}\widetilde{J}_{i+\frac{1}{2},q}^{n,q}\left(\widetilde{\mu}_{i+1,q}^{n,q}-\widetilde{\mu}_{i,q}^{n,q}\right)\nonumber\\
=&\,-\Delta t\sum_{i=1}^{N_{x}-1}\widetilde{J}_{i+\frac{1}{2},q}^{n,q}\widetilde{V}_{i+\frac{1}{2},q}^{n,q}
=-\Delta t\sum_{i=1}^{N_{x}-1}\left[\left(\widetilde{V}_{i+\frac{1}{2},q}^{n,q}\right)^{+}M(\widetilde{\phi}_{i,q}^{n,q},\widetilde{\phi}_{i+1,q}^{n,q})
+\left(\widetilde{V}_{i+\frac{1}{2},q}^{n,q}\right)^{-}M(\widetilde{\phi}_{i+1,q}^{n,q},\widetilde{\phi}_{i,q}^{n,q})\right]\widetilde{V}_{i+\frac{1}{2},q}^{n,q}\nonumber\\
\le&\,-\Delta t\sum_{i=1}^{N_{x}-1}\min\left\{M(\widetilde{\phi}_{i,q}^{n,q},\widetilde{\phi}_{i+1,q}^{n,q}),M(\widetilde{\phi}_{i+1,q}^{n,q},\widetilde{\phi}_{i,q}^{n,q})\right\}
\left|\widetilde{V}_{i+\frac{1}{2},q}^{n,q}\right|^{2}\le 0.\label{thf7}
\end{align}

Combining~\eqref{thf5}--\eqref{thf7} with the row-sweep ghost relation \eqref{eqghostq} gives
\begin{align}
\frac{\widetilde{\mathcal{W}}^{n,q}-\widetilde{\mathcal{W}}^{n,q-1}}{\Delta x\Delta y}
=&\,\frac{\varepsilon^{2}}{2}\sum_{i=1}^{N_{x}-1}\sum_{j=1}^{N_{y}}\left[\left(\frac{\widetilde{\phi}_{i+1,j}^{n,q}-\widetilde{\phi}_{i,j}^{n,q}}{\Delta x}\right)^{2}
-\left(\frac{\widetilde{\phi}_{i+1,j}^{n,q-1}-\widetilde{\phi}_{i,j}^{n,q-1}}{\Delta x}\right)^{2}\right]\nonumber\\
&\,+\frac{\varepsilon^{2}}{2}\sum_{i=1}^{N_{x}}\sum_{j=1}^{N_{y}-1}\left[\left(\frac{\widetilde{\phi}_{i,j+1}^{n,q}-\widetilde{\phi}_{i,j}^{n,q}}{\Delta y}\right)^{2}
-\left(\frac{\widetilde{\phi}_{i,j+1}^{n,q-1}-\widetilde{\phi}_{i,j}^{n,q-1}}{\Delta y}\right)^{2}\right]\nonumber\\
&\,+\sum_{i=1}^{N_{x}}\sum_{j=1}^{N_{y}}\left[F(\widetilde{\phi}_{i,j}^{n,q})-F(\widetilde{\phi}_{i,j}^{n,q-1})\right]+\frac{1}{\Delta y}\sum_{i=1}^{N_{x}}\left[g(\widetilde{\phi}_{i,1}^{n,q})-g(\widetilde{\phi}_{i,1}^{n,q-1})\right]\nonumber\\
\le&\,-\varepsilon^{2}\sum_{i=1}^{N_{x}}\sum_{j=1}^{N_{y}}\left[(\partial_{x}^{2}\widetilde{\phi})_{i,j}^{n,q}+(\partial_{y}^{2}\widetilde{\phi})_{i,j}^{n,q}\right](\widetilde{\phi}_{i,j}^{n,q}-\widetilde{\phi}_{i,j}^{n,q-1})\nonumber\\
&\,-\varepsilon^{2}\sum_{i=1}^{N_{x}}\left(\frac{\widetilde{\phi}_{i,1}^{n,q}-\widetilde{\phi}_{i,0}^{n,q}}{\Delta y^{2}}\right)(\widetilde{\phi}_{i,1}^{n,q}-\widetilde{\phi}_{i,1}^{n,q-1})\nonumber\\
&\,+\widetilde{\xi}^{n,q}\sum_{i=1}^{N_{x}}\sum_{j=1}^{N_{y}}F'(\widetilde{\phi}_{i,j}^{n,q})(\widetilde{\phi}_{i,j}^{n,q}-\widetilde{\phi}_{i,j}^{n,q-1})+\frac{1}{\Delta y}\widetilde{\eta}^{n,q}\sum_{i=1}^{N_{x}}g'(\widetilde{\phi}_{i,1}^{n,q})\left(\widetilde{\phi}_{i,1}^{n,q}-\widetilde{\phi}_{i,1}^{n,q-1}\right)\nonumber\\
\le&\,-\varepsilon^{2}\sum_{i=1}^{N_{x}}\sum_{j=1}^{N_{y}}(\Delta\widetilde{\phi})_{i,j}^{n,q}(\widetilde{\phi}_{i,j}^{n,q}-\widetilde{\phi}_{i,j}^{n,q-1})-\frac{1}{\kappa\Delta t\Delta y}\sum_{i=1}^{N_{x}}\left(\widetilde{\phi}_{i,1}^{n,q}-\widetilde{\phi}_{i,1}^{n,q-1}\right)^{2}\nonumber\\
&\,+\widetilde{\xi}^{n,q}\sum_{i=1}^{N_{x}}\sum_{j=1}^{N_{y}}F'(\widetilde{\phi}_{i,j}^{n,q})(\widetilde{\phi}_{i,j}^{n,q}-\widetilde{\phi}_{i,j}^{n,q-1})\nonumber\\
\le&\,-\varepsilon^{2}\sum_{i=1}^{N_{x}}\sum_{j=1}^{N_{y}}(\Delta\widetilde{\phi})_{i,j}^{n,q}(\widetilde{\phi}_{i,j}^{n,q}-\widetilde{\phi}_{i,j}^{n,q-1})+\widetilde{\xi}^{n,q}\sum_{i=1}^{N_{x}}\sum_{j=1}^{N_{y}}F'(\widetilde{\phi}_{i,j}^{n,q})(\widetilde{\phi}_{i,j}^{n,q}-\widetilde{\phi}_{i,j}^{n,q-1})\nonumber\\
=&\,\sum_{i=1}^{N_{x}}\sum_{j=1}^{N_{y}}(\widetilde{\phi}_{i,j}^{n,q}-\widetilde{\phi}_{i,j}^{n,q-1})\widetilde{\mu}_{i,j}^{n,q}\nonumber\\
\le&\,-\Delta t\sum_{i=1}^{N_{x}-1}\min\left\{M(\widetilde{\phi}_{i,q}^{n,q},\widetilde{\phi}_{i+1,q}^{n,q}),M(\widetilde{\phi}_{i+1,q}^{n,q},\widetilde{\phi}_{i,q}^{n,q})\right\}
\left|\widetilde{V}_{i+\frac{1}{2},q}^{n,q}\right|^{2}\le 0.\label{thf8}
\end{align}

An analogous argument shows that each $y$-directional sweep is also dissipative:
\begin{equation}
\frac{\widehat{\mathcal{W}}^{n,p}-\widehat{\mathcal{W}}^{n,p-1}}{\Delta t}
\le -\Delta x\Delta y\sum_{j=1}^{N_{y}-1}\min\left\{M(\widehat{\phi}_{p,j}^{n,p},\widehat{\phi}_{p,j+1}^{n,p}),M(\widehat{\phi}_{p,j+1}^{n,p},\widehat{\phi}_{p,j}^{n,p})\right\}
\left|\widehat{V}_{p,j+\frac{1}{2}}^{n,p}\right|^{2}\le 0,\label{thf9}
\end{equation}
where
\begin{align}
\widehat{\mathcal{W}}^{n,p}
=&\,\Delta x\Delta y\sum_{i=1}^{N_{x}-1}\sum_{j=1}^{N_{y}}\frac{\varepsilon^{2}}{2}\left(\frac{\widehat{\phi}_{i+1,j}^{n,p}-\widehat{\phi}_{i,j}^{n,p}}{\Delta x}\right)^{2}
+\Delta x\Delta y\sum_{i=1}^{N_{x}}\sum_{j=1}^{N_{y}-1}\frac{\varepsilon^{2}}{2}\left(\frac{\widehat{\phi}_{i,j+1}^{n,p}-\widehat{\phi}_{i,j}^{n,p}}{\Delta y}\right)^{2}\nonumber\\
&+\Delta x\Delta y\sum_{i=1}^{N_{x}}\sum_{j=1}^{N_{y}}F(\widehat{\phi}_{i,j}^{n,p})+\Delta x\sum_{i=1}^{N_{x}}g(\widehat{\phi}_{i,1}^{n,p}).\label{thf10}
\end{align}

Summing the contributions from both sweeps yields
\begin{align}
\frac{\mathcal{W}^{n+1}-\mathcal{W}^{n}}{\Delta t}=&\,\frac{\widehat{\mathcal{W}}^{n,p}|_{p=_{N_{x}}}-\widetilde{\mathcal{W}}^{n,q}|_{q=0}
}{\Delta t}=\frac{\widehat{\mathcal{W}}^{n,p}|_{p=_{N_{x}}}-\widehat{\mathcal{W}}^{n,p}|_{p=0}
+\widetilde{\mathcal{W}}^{n,q}|_{q=_{N_{y}}}-\widetilde{\mathcal{W}}^{n,q}|_{q=0}}{\Delta t}\nonumber\\
=&\,\sum_{p=1}^{N_{x}}\frac{\widehat{\mathcal{W}}^{n,p}-\widehat{\mathcal{W}}^{n,p-1}}{\Delta t}
+\sum_{q=1}^{N_{y}}\frac{\widetilde{\mathcal{W}}^{n,q}-\widetilde{\mathcal{W}}^{n,q-1}}{\Delta t}\nonumber\\
\le&\, -\Delta x\Delta y\sum_{p=1}^{N_{x}}\sum_{j=1}^{N_{y}-1}
\min\left\{M(\widehat{\phi}_{p,j}^{n,p},\widehat{\phi}_{p,j+1}^{n,p}),M(\widehat{\phi}_{p,j+1}^{n,p},\widehat{\phi}_{p,j}^{n,p})\right\}\left|\widehat{V}_{p,j+\frac{1}{2}}^{n,p}\right|^{2}\nonumber\\
&-\Delta x\Delta y\sum_{i=1}^{N_{x}-1}\sum_{q=1}^{N_{y}}
\min\left\{M(\widetilde{\phi}_{i,q}^{n,q},\widetilde{\phi}_{i+1,q}^{n,q}),M(\widetilde{\phi}_{i+1,q}^{n,q},\widetilde{\phi}_{i,q}^{n,q})\right\}\left|\widetilde{V}_{i+\frac{1}{2},q}^{n,q}\right|^{2}
\le 0.\label{thf11}
\end{align}

This completes the proof.
\end{proof}

Thus, the sequential splitting replaces a multidimensional nonlinear update with a sequence of one-dimensional solves while preserving the three structural properties. The wall ghost relations \eqref{eqghostq} and \eqref{eqghostp} retain the coupling to contact line relaxation. For an idealized cost comparison, suppose that solving a system with $N$ unknowns costs $\mathcal{O}(N^{\beta})$, where $2<\beta\le3$, and that iteration counts are comparable. With $N$ cells per direction in $d$ dimensions, a monolithic solve then costs $\mathcal{O}(N^{d\beta})$, whereas $dN^{d-1}$ line solves cost $\mathcal{O}(dN^{\beta+d-1})$. This estimate explains the potential saving from smaller systems; actual runtimes also depend on sparsity, solver design, and nonlinear convergence.

\section{Numerical results}
\label{sec5}

The analysis above establishes discrete structural properties; the experiments now examine the resulting evolution and its sensitivity to modeling choices. Using the dimensional-splitting scheme, we first compare single-film shrinkage with the estimate in Section~2.3 and then examine mass transfer between separated islands. We next study contact line relaxation and wettability-dependent shapes, before considering an elongated film whose evolution may involve breakup. Throughout, bounds, mass, and energy diagnostics complement the geometric observations.

The initial phase-field profile is chosen as
\begin{equation}
\phi(\bm{x},t)|_{t=0}=\beta_{_{\theta}}\tanh\left(\frac{\mathrm{dist}(\bm{x},\Gamma_{0})}{\sqrt{2}\varepsilon}\right),\quad\bm{x}\in\Omega,\label{eqz1}
\end{equation}

where $\Gamma_{0}$ denotes the initial curve or surface separating the film and vapor phases, and $\mathrm{dist}(\bm{x},\Gamma_{0})$ denotes the signed distance from point $\bm{x}$ to $\Gamma_{0}$, taken positive inside the film. For the polynomial potential, the prefactor $\beta_{\theta}$ is replaced by $1$. The hyperbolic-tangent profile provides smooth initial data; it is not an exact equilibrium profile for the logarithmic potential. A Newton-type iteration solves the nonlinear systems, using the previous time-step solution as the initial guess. Unless specified otherwise, the parameters are those in Table~\ref{tabp}.

\begin{table}[!htp]
\renewcommand\arraystretch{1.5}
\centering
\caption{Default parameter values used in the numerical simulations.}\label{tabp}
\tabcolsep 0.40in
\begin{tabular}[c]
{cccccc} \hline
$\Delta t$   &$\Delta x$   &$\Delta y$   &$\varepsilon$   &$\kappa$   &$\theta_{s}$\\   \hline
$10^{-4}$   &$0.004$   &$0.004$   &$0.02$   &$5000$   &$\frac{3\pi}{4}$\\   \hline
\end{tabular}
\end{table}

\subsection{Spontaneous shrinkage}

We begin with a single circular film segment, for which the estimate in Section~2.3 provides a quantitative reference for finite-width shrinkage. The domain is $\Omega=[-2,2]\times[0,3]$, and the initial segment has radius $r_{_0}=1$ and Young angle $\theta_s=3\pi/4$. After the solution becomes numerically stationary, the radius $r_{\rm eq}$ is obtained by fitting a circular arc to the $\phi=0$ contour. Figure~\ref{figsdr} compares $\delta r=r_{\rm eq}-r_{_0}$ with \eqref{eqt7} for $\theta\in[0.2,0.9]$. The computation reproduces the predicted increase in contraction with temperature. Agreement is close at lower temperatures, while a visible discrepancy develops toward the upper end of the tested range. The comparison supports the estimate as a leading-order description within its stated assumptions, rather than an exact prediction at all temperatures.

\begin{figure}[!htp]
	\centering
	\includegraphics[width=8cm]{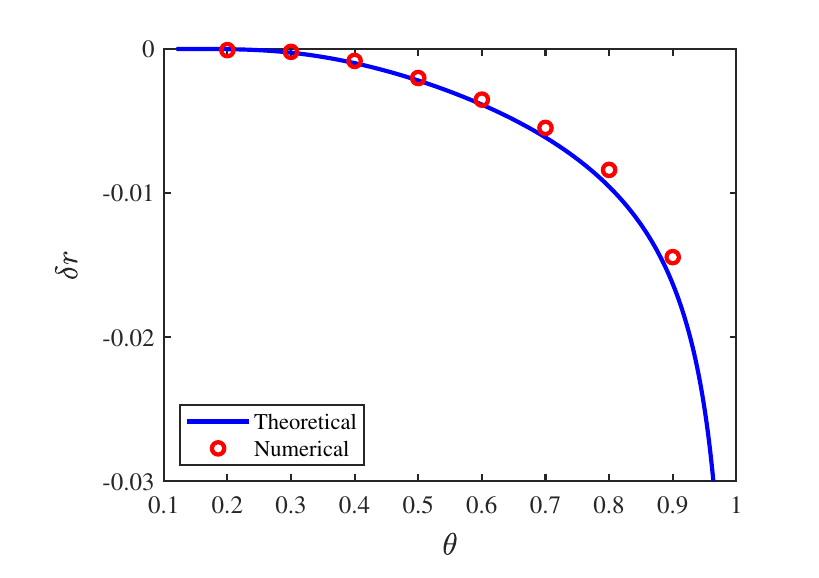}
	\caption{
Predicted and computed radius contraction $\delta r$ for the logarithmic Flory--Huggins potential as a function of $\theta$.}\label{figsdr}
\end{figure}

\subsection{Coarsening effect}

The single-film test measures geometric shrinkage, but it does not reveal whether material is transferred between disconnected islands. We now examine this second finite-width effect on a substrate, where bulk-diffusion coarsening can occur even when the mobility is degenerate~\cite{Pesce21,Bretin22}. Following the motivation of the substrate-free tests in~\cite{Huang23b}, we compare the logarithmic and polynomial potentials in the presence of dynamic contact lines.

The initial configuration consists of a square of side length $0.4$ centered at $(0,0.2)$ and two squares of side length $0.2$ centered at $(\pm0.45,0.1)$. Figure~\ref{figsdiffusion} compares their evolution for several logarithmic-potential temperatures and for the polynomial potential. At the higher temperatures shown, and with the polynomial potential, the smaller islands shrink while the central island grows. At $\theta=0.3$, all three islands remain distinct over the reported interval and relax toward rounded shapes, indicating substantially reduced inter-island mass transfer.

Table~\ref{tabS} reports the relative change in total film area, $\delta S=|S(t)-S(0)|/S(0)$, at $t=5$. It is $0.1578\%$ for $\theta=0.3$, compared with $1.8791\%$ for the polynomial potential. This diagnostic measures geometric area variation, not the error in the conserved phase-field mass, and does not by itself measure exchange between individual islands. Together with the snapshots, it indicates reduced area loss and coarsening at low temperature. To place these morphological differences alongside the discrete structural behavior, Figures~\ref{figsdiffusion2} and~\ref{figsdiffusion3} report phase bounds, energy, and mass. Both computations show bounded phase values, decreasing energy, and conserved mass to the resolution of the plots. The multipliers approach $1$ after initial transients; the logarithmic case exhibits a visible early deviation in the wall multiplier.

\begin{figure}[htp]
\centering
\includegraphics[width=\textwidth]{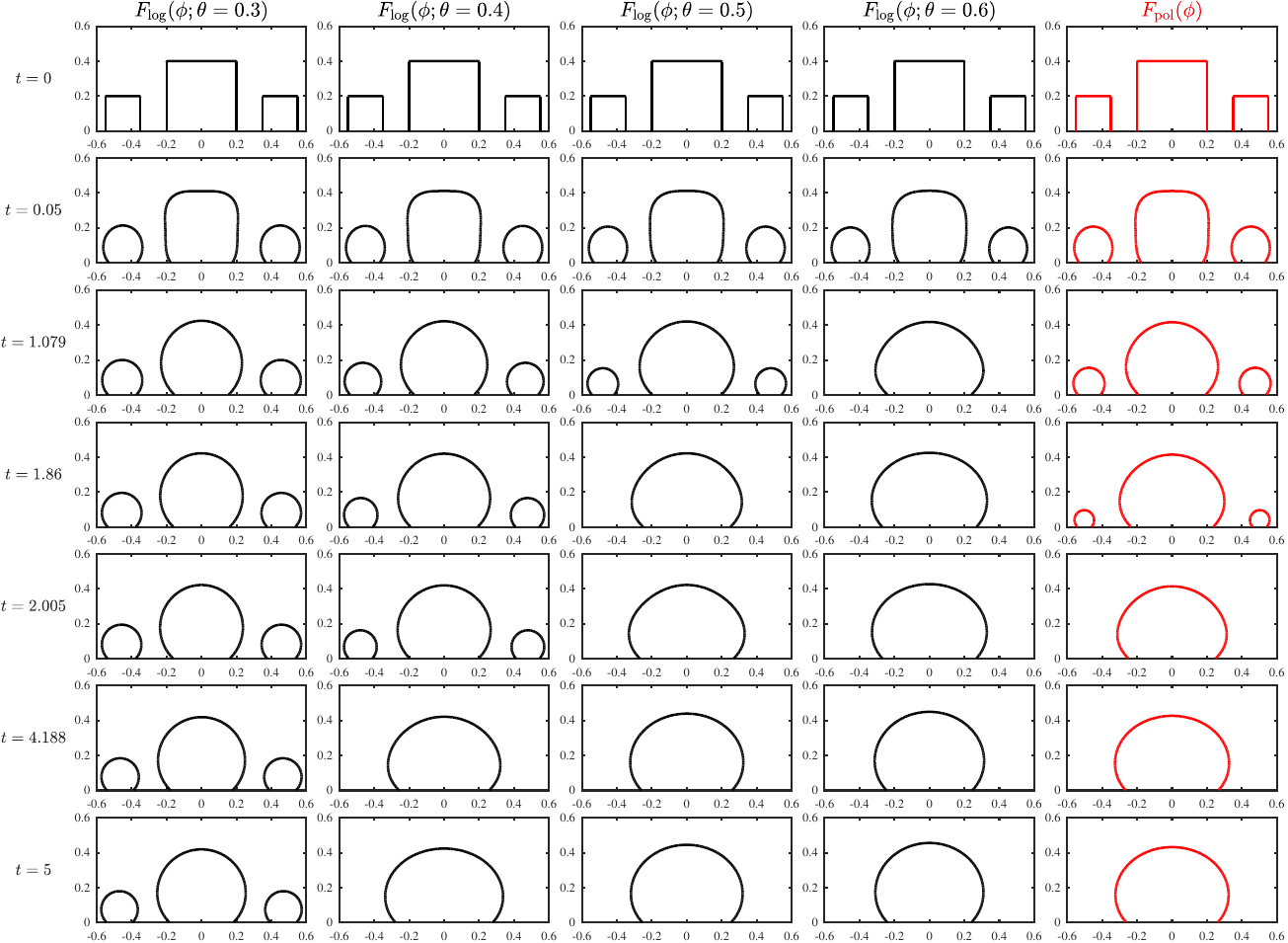}
\caption{Evolution of a large square film and two smaller squares under the logarithmic potential at various temperatures $\theta$ and under the polynomial potential.}\label{figsdiffusion}
\end{figure}

\begin{figure}[!htp]
\centering
\includegraphics[width=\textwidth]{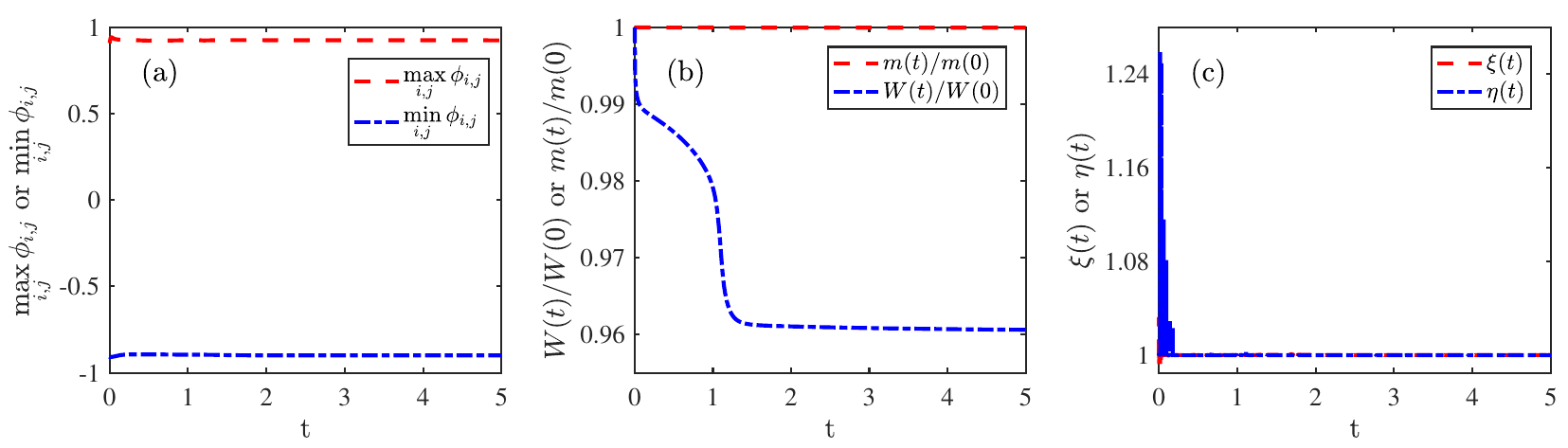}
\caption{
Time histories of (a) the extrema of $\phi_{i,j}$, (b) normalized energy and mass, and (c) the Lagrange multipliers $\xi(t)$ and $\eta(t)$ under the logarithmic potential at $\theta=0.6$.}\label{figsdiffusion2}
\end{figure}
\begin{figure}[!htp]
\centering
\includegraphics[width=\textwidth]{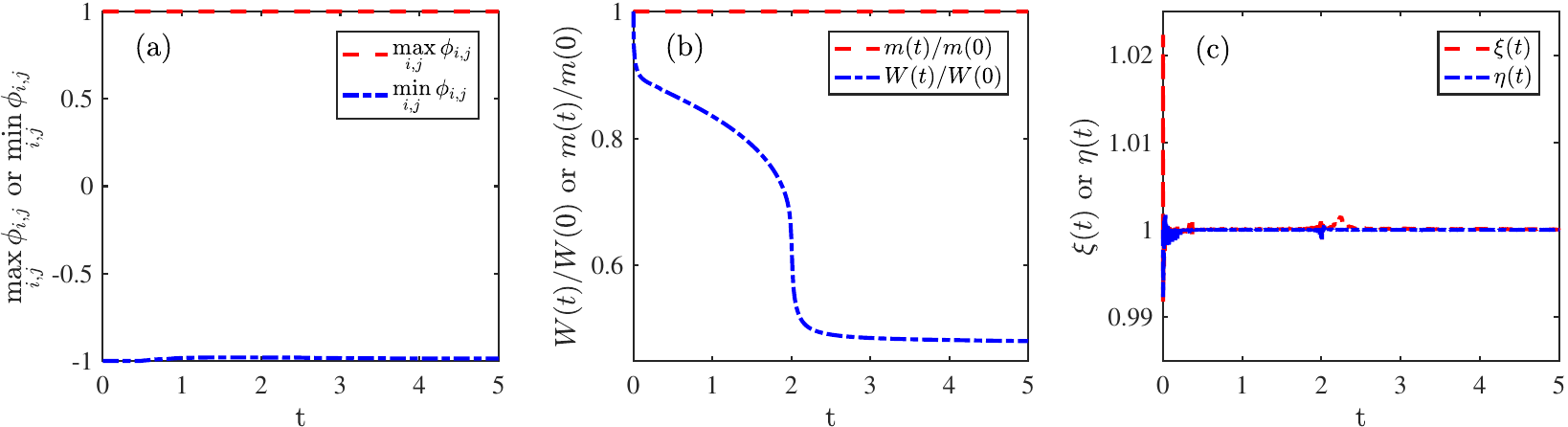}
\caption{
Time histories of (a) the extrema of $\phi_{i,j}$, (b) normalized energy and mass, and (c) the Lagrange multipliers $\xi(t)$ and $\eta(t)$ under the polynomial potential.}\label{figsdiffusion3}
\end{figure}

\begin{table}[!htp]
\renewcommand\arraystretch{1.5}
\centering
\caption{
Relative total film-area change $\delta S=|S(t)-S(0)|/S(0)$ at $t=5$ for the three-island configuration.}\label{tabS}
\tabcolsep 0.31in
\begin{tabular}[c]
{cccccc} \hline
&$\theta=0.3$   &$\theta=0.4$   &$\theta=0.5$   &$\theta=0.6$   &$F_{pol}(\phi)$\\   \hline
$\delta S$    &$0.1578\%$   &$0.3023\%$   &$0.3252\%$   &$1.4090\%$   &$1.8791\%$\\   \hline
\end{tabular}
\end{table}

\subsection{Contact line mobility}

The preceding tests focus on the influence of the potential; we now turn to the boundary relaxation that couples the film to the substrate. Its rate is set by $\kappa$, the contact line mobility, while $\theta_s$ prescribes the Young angle toward which the contact line relaxes. We compare $\kappa=10^3,2\times10^3,5\times10^3,10^4$ for the logarithmic and polynomial potentials. The apparent contact angle $\theta_d$, measured from the diffuse interface, retains a small offset from $\theta_s$ in the reported stationary states. Finite interface width, discretization, and angle extraction can all contribute to this offset.

Figure~\ref{figka1} shows relaxation from an initial angle $\theta_d=\pi/2$. Larger $\kappa$ gives a faster initial increase in the angle. The curves for $\kappa=5\times10^3$ and $10^4$ are close for the tested parameters, suggesting limited sensitivity to further increases in mobility in this regime. This trend is consistent with the IEQ computations in~\cite{Huang19b}.

The relaxation curves describe the approach to a stationary state; Table~\ref{tabka1} examines the apparent angle reached in that state. For this test, lower logarithmic-potential temperatures give angles closer to the prescribed value. At $\theta=0.6948$, the logarithmic and polynomial values are similar. This is an empirical comparison: the shrinkage-prefactor crossover in Remark~\ref{remdr} does not establish an identity between their contact angles. Figures~\ref{figka3} and~\ref{figka4} show bounded phase values, decreasing energy, and conserved mass to the resolution displayed.

\begin{figure}[!htp]
\centering
\includegraphics[width=\textwidth]{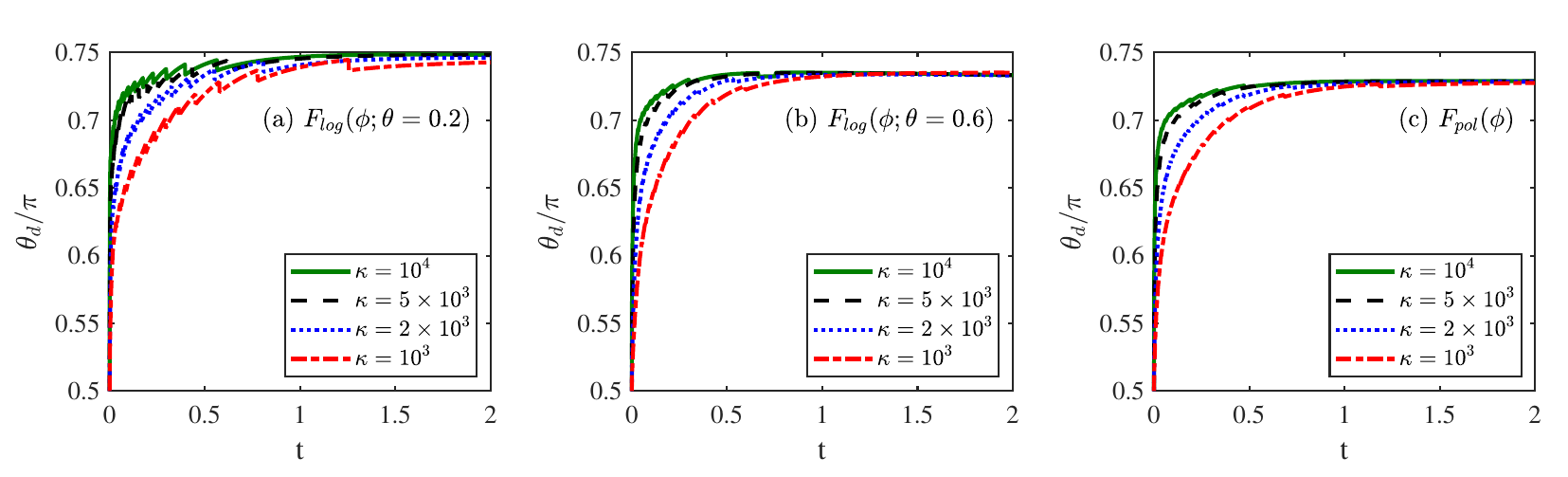}
\caption{
Dynamic contact angle for the indicated mobilities and potentials. The vertical axis shows $\theta_d/\pi$.}\label{figka1}
\end{figure}

\begin{figure}[!htp]
\centering
\includegraphics[width=\textwidth]{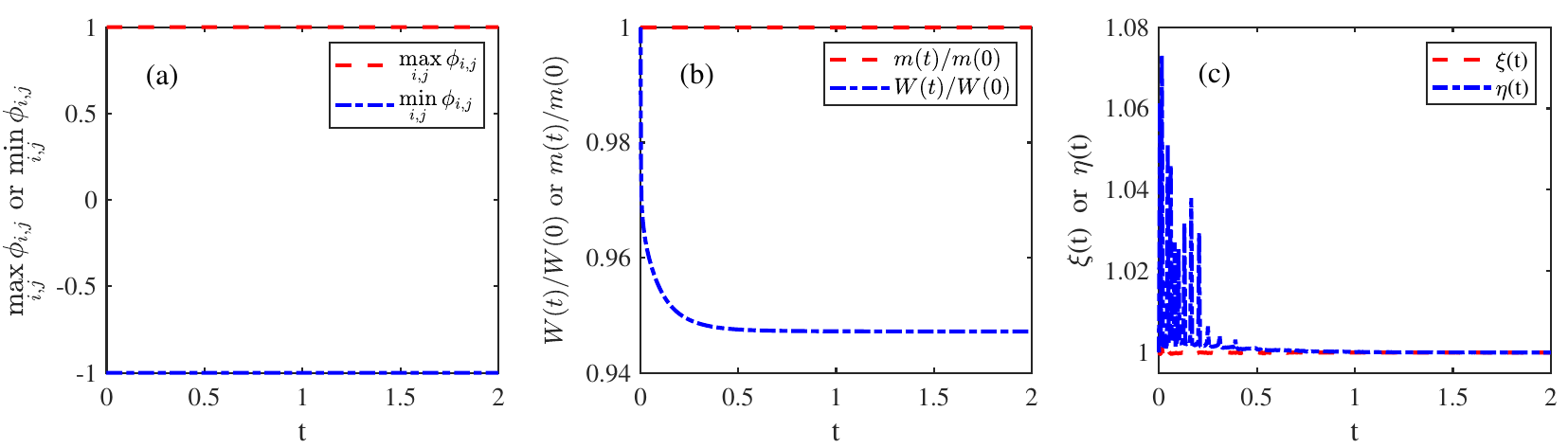}
\caption{
Time histories of (a) the extrema of $\phi_{i,j}$, (b) normalized energy and mass, and (c) the Lagrange multipliers $\xi(t)$ and $\eta(t)$ under the logarithmic potential at $\theta=0.2$.}\label{figka3}
\end{figure}

\begin{figure}[!htp]
\centering
\includegraphics[width=\textwidth]{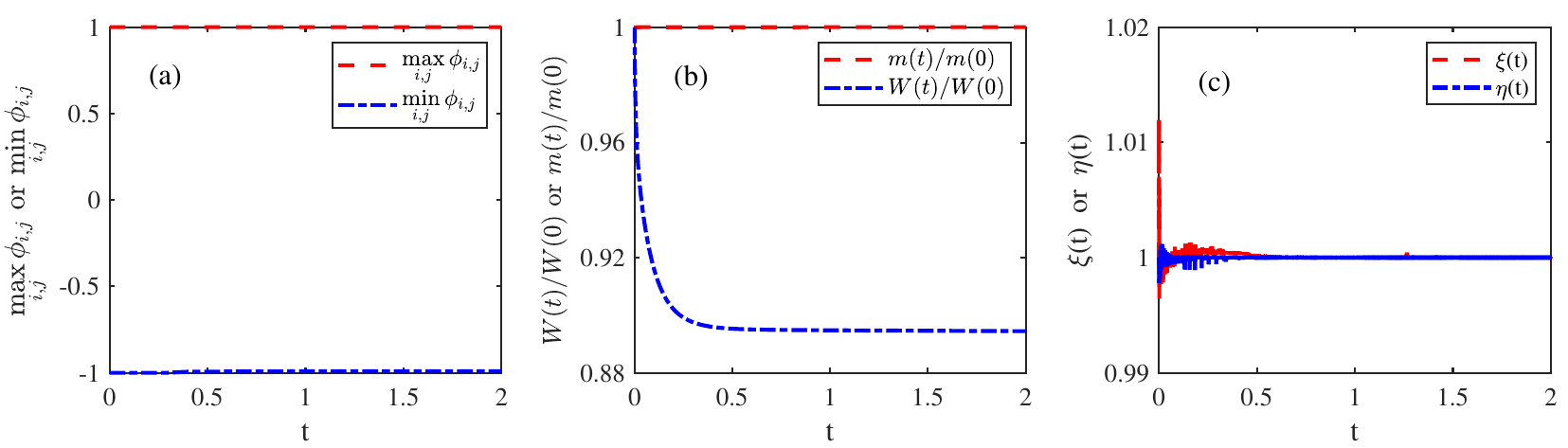}
\caption{
Time histories of (a) the extrema of $\phi_{i,j}$, (b) normalized energy and mass, and (c) the Lagrange multipliers $\xi(t)$ and $\eta(t)$ under the polynomial potential.}\label{figka4}
\end{figure}

\begin{table}[!htp]
\renewcommand\arraystretch{1.5}
\centering
\caption{
Apparent contact angle $\theta_d$ at numerical equilibrium for different potentials. All angles are in radians.}\label{tabka1}
\tabcolsep 0.21in
\begin{tabular}[c]
{ccccccc} \hline
$\theta_{s}$ &$\theta=0.2$ &$\theta=0.4$ &$\theta=0.6$ &$\theta=0.6948$ &$\theta=0.8$ &$F_{pol}(\phi)$ \\   \hline
$\frac{3\pi}{4}\approx 2.3562$   &2.3506   &2.3307   &2.3040  &2.2908  &2.2773   &2.2898\\  \hline
\end{tabular}
\end{table}

\FloatBarrier
\subsection{Wettability-dependent evolution}

Having examined the relaxation rate at a prescribed Young angle, we next vary that angle to study how substrate wettability changes the evolving film shape. We follow an initially rectangular island at $\theta_s=0,\pi/4,\pi/3,\pi/2,3\pi/4,\pi$, using the logarithmic potential with $\theta=0.2$. The endpoint values represent the complete-wetting and complete-dewetting limits of the prescribed wall energy.

Figure~\ref{figths} shows spreading at $\theta_s=0$, an approximately circular island at $\theta_s=\pi$, and intermediate cap-like shapes at the other angles. Table~\ref{tabD} reports relative area changes at $t=6.5$, decreasing from $0.5527\%$ to $0.0153\%$ across these cases. For intermediate angles, this trend is qualitatively consistent with the reduced contraction predicted by \eqref{eqt7} at larger $\theta_s$ when the other geometric parameters are fixed. The table is not a direct test of the inverse-angle law: the evolving geometries differ, and the estimate assumes $0<\theta_s<\pi$, excluding both endpoints.

\begin{figure}[!htp]
\centering
\includegraphics[width=\textwidth]{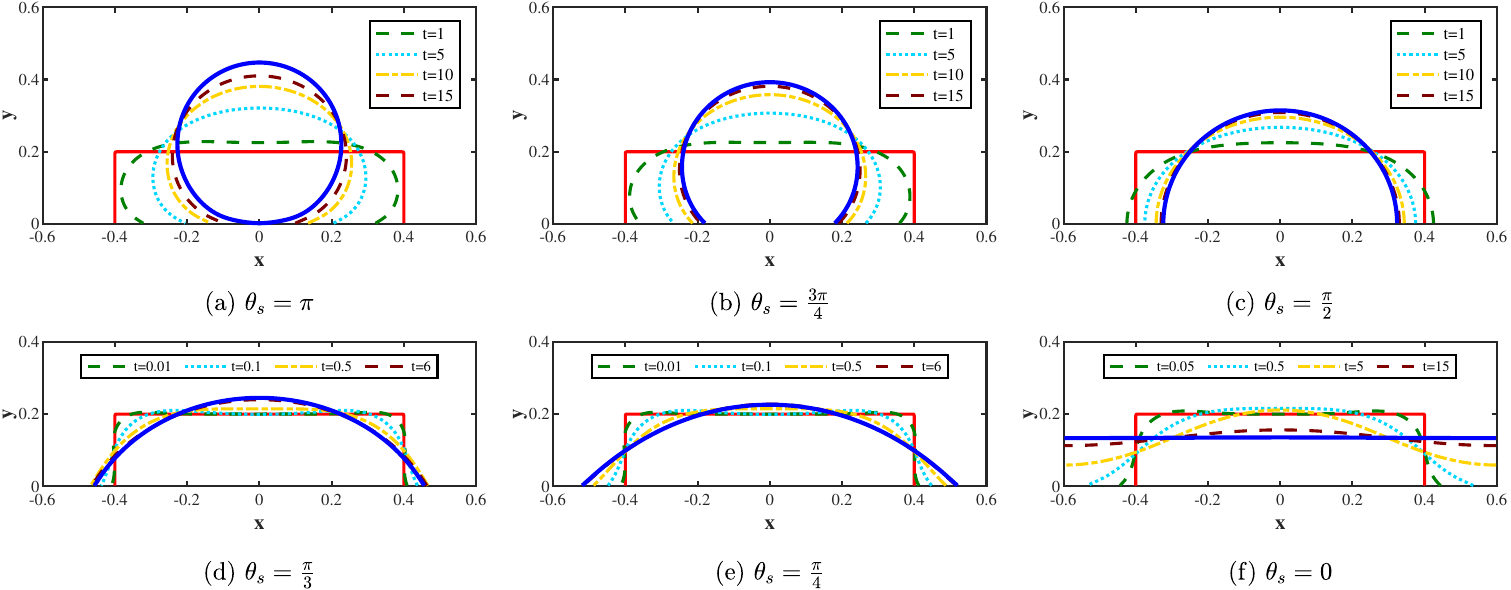}
\caption{
Evolution of an initially rectangular film for six prescribed Young angles. Colors and line styles indicate the states and times shown in each panel.}\label{figths}
\end{figure}

\begin{table}[!htp]
\renewcommand\arraystretch{1.5}
\centering
\caption{
Relative total film-area change $\delta S=|S(t)-S(0)|/S(0)$ at $t=6.5$ for different prescribed Young angles.}\label{tabD}
\tabcolsep 0.23in
\begin{tabular}[c]
{ccccccc} \hline
&$\theta_{s}=0$   &$\theta_{s}=\frac{\pi}{4}$   &$\theta_{s}=\frac{\pi}{3}$   &$\theta_{s}=\frac{\pi}{2}$   &$\theta_{s}=\frac{3\pi}{4}$   &$\theta_{s}=\pi$\\   \hline
$\delta S$   &$0.5527\%$   &$0.2591\%$   &$0.2354\%$   &$0.0453\%$   &$0.0168\%$   &$0.0153\%$\\   \hline
\end{tabular}
\end{table}

\subsection{Pinch-off}

The preceding examples describe relaxation toward individual island shapes; an elongated film also raises the question of whether the evolving interface remains connected. To examine how the potential influences this topological outcome, we compare the evolution of a rectangular film initially occupying $[-1,1]\times[0,0.1]$, with aspect ratio $20$. We use the logarithmic potential at $\theta=0.3$ and the polynomial potential, with $\varepsilon=0.03$ and $\theta_s=3\pi/4$ in both cases.

\begin{figure}[!htbp]
\centering
\includegraphics[width=12cm]{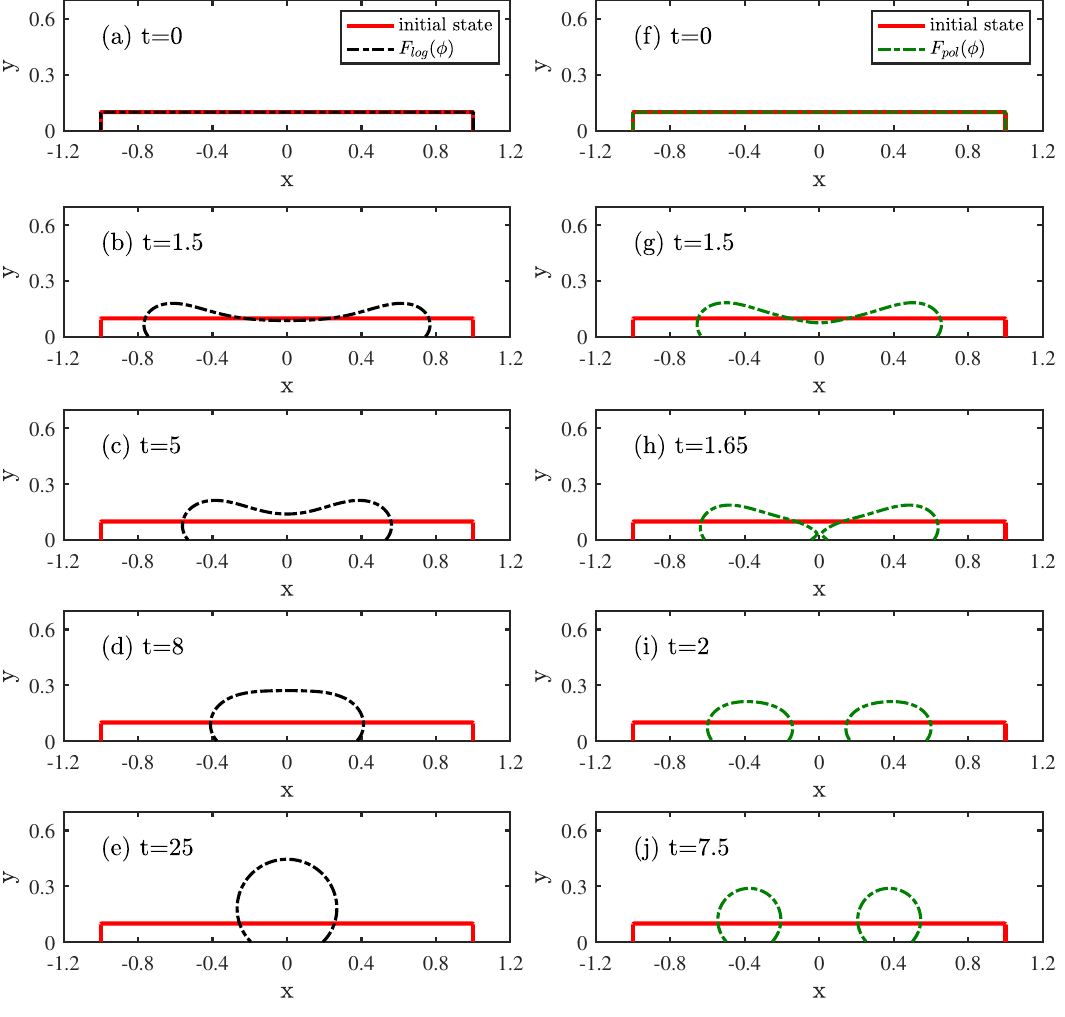}
\caption{
Evolution of a film initially occupying $[-1,1]\times[0,0.1]$: (a)--(e) logarithmic potential with $\theta=0.3$; (f)--(j) polynomial potential. Both cases use $\theta_s=3\pi/4$ and $\varepsilon=0.03$. Snapshot times differ between the two columns.}\label{figpinchoff}
\end{figure}

With the polynomial potential, the film separates into two islands and the reported stationary configuration has a relative area change of $17.5386\%$. With the logarithmic potential at $\theta=0.3$, it remains connected over the simulated interval and relaxes toward a single rounded island, with an area change of $0.0767\%$ at $t=25$. Figure~\ref{figpinchoff} illustrates this strong sensitivity to the potential and the substantially smaller area variation in the logarithmic case. This sensitivity makes the choice of potential consequential for topology as well as area loss, although the comparison alone does not identify which breakup history represents the sharp-interface limit. Surface diffusion can itself produce pinch-off~\cite{Wong00}; determining whether the polynomial-potential breakup in this configuration is spurious requires refinement studies or a matching sharp-interface benchmark.

Figures~\ref{figspinchoff2} and~\ref{figspinchoff3} show bounded phase values, decreasing energy, and conserved mass to the plotted resolution for both potentials. The multipliers settle near $1$ after transients, including a larger initial excursion of the wall multiplier in the logarithmic case. Thus, similar structural diagnostics can accompany different morphological outcomes; structure preservation alone does not establish physical fidelity.

\begin{figure}[H]
\centering
\includegraphics[width=\textwidth]{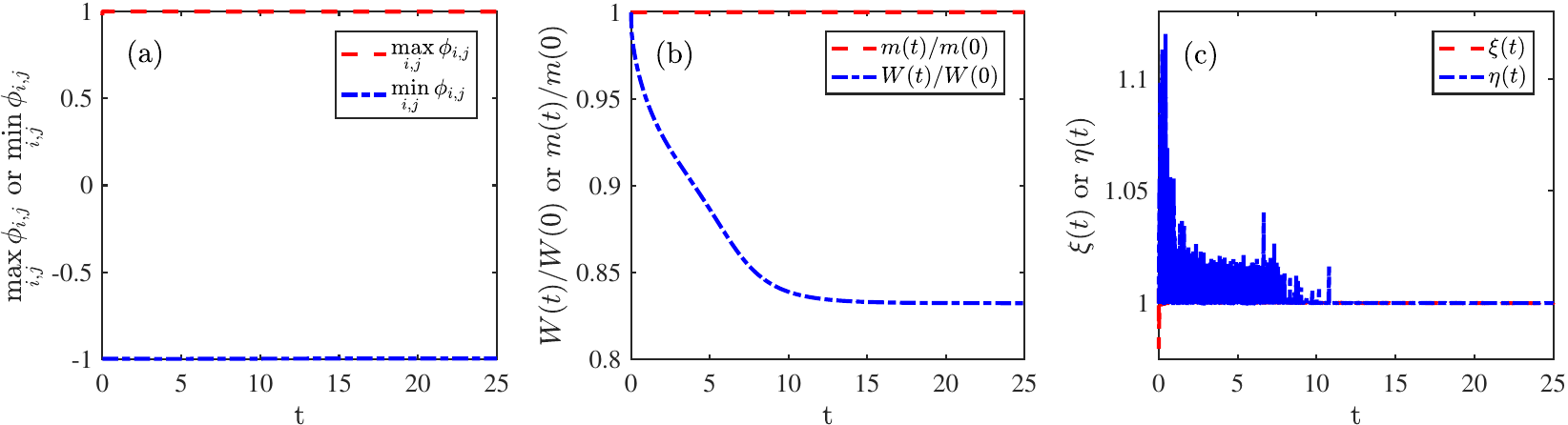}
\caption{
Time histories of (a) the extrema of $\phi_{i,j}$, (b) normalized energy and mass, and (c) the Lagrange multipliers $\xi(t)$ and $\eta(t)$ under the logarithmic potential at $\theta=0.3$.}\label{figspinchoff3}
\end{figure}

\begin{figure}[H]
\centering
\includegraphics[width=\textwidth]{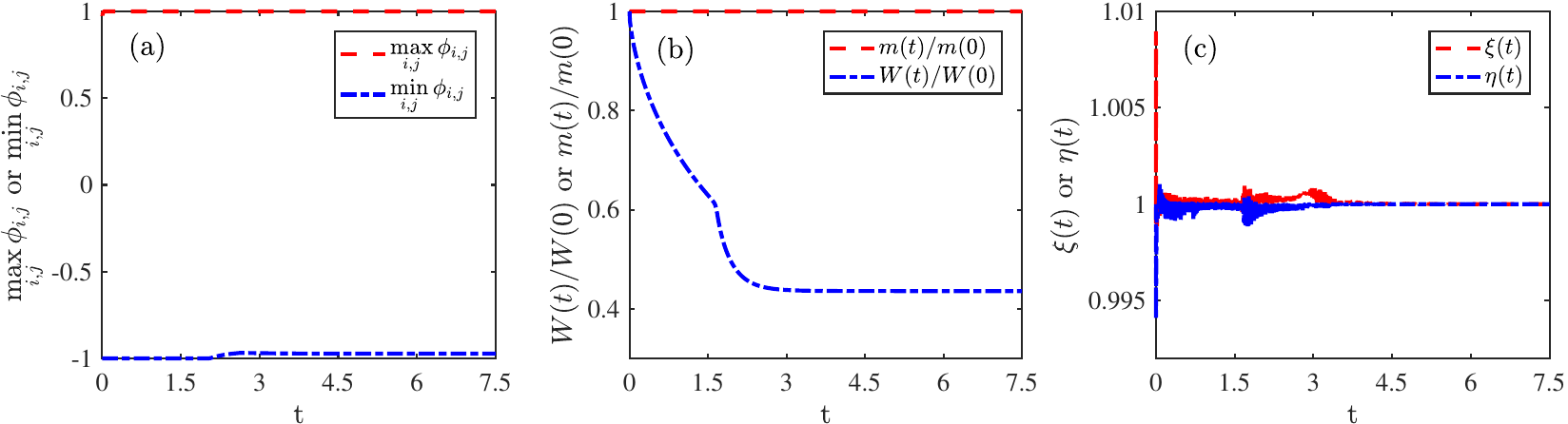}
\caption{
Time histories of (a) the extrema of $\phi_{i,j}$, (b) normalized energy and mass, and (c) the Lagrange multipliers $\xi(t)$ and $\eta(t)$ under the polynomial potential.}\label{figspinchoff2}
\end{figure}

\FloatBarrier
\section{Conclusion}
\label{seccon}

We have addressed the coupling of bulk and wall energies in phase-field simulations of solid-state dewetting. The proposed upwind Lagrange multiplier scheme combines a logarithmic Flory--Huggins potential with dynamic contact line conditions. The construction couples the bulk and wall energy balances and, for solutions of the discrete equations, preserves strict phase-field bounds, mass, and energy dissipation. Sequential row and column updates retain these properties while reducing the size of the nonlinear systems.

Alongside these discrete guarantees, the shrinkage analysis and numerical experiments assess finite-width effects that structure preservation alone cannot resolve. The leading-order estimate relates radius contraction to the potential, interface width, domain size, and contact angle within a specified circular-segment ansatz. The numerical results support its low-temperature predictions and show increasing deviations toward the upper end of the tested temperature range. The remaining experiments demonstrate reduced area variation and inter-island coarsening at low temperature, as well as sensitivity to contact line mobility and wettability. The elongated-film comparison shows that the potential can alter breakup behavior even when both computations preserve the same discrete structural properties.

The different breakup histories make refinement studies and matching sharp-interface benchmarks a necessary next step in assessing physical fidelity. Complementary numerical analysis should establish nonlinear solvability and convergence and quantify mesh and time-step errors. Extensions to anisotropic surface energies and three-dimensional geometries would broaden the model's applicability and allow the computational benefits of splitting to be assessed in more demanding settings.

\begin{acknowledgement}

The numerical computations were performed at the Supercomputing Center of Wuhan University.
\end{acknowledgement}
\medskip
\noindent{\textbf{Funding}} ~ This work was partially supported by the National Natural Science Foundation of China (Nos. 12001210, 12131010, 12301558), the Natural Science Foundation of Henan Province (No. 252300420308) and the Key Scientific Research Project of Universities in Henan Province (No. 27A110002).

\medskip
\noindent{\textbf{Data Availability}} ~
The code used in this study is available from the corresponding author upon reasonable request.

\section*{Declarations}
{\textbf{Conflict of interest}} ~The authors declare no competing interests.



\end{document}